\def\serieslogo@{} \def\@setcopyright{} \makeatother
\crefname{subsection}{Subsection}{Subsections}
\crefname{exam}{Example}{Examples}
\crefname{reform}{Reformulation}{Reformulations}
\renewcommand*\env@matrix[1][c]{\hskip -\arraycolsep
	\let\@ifnextchar\new@ifnextchar
	\array{*\c@MaxMatrixCols #1}}
\numberwithin{equation}{section}
\newtheorem{thm}{Theorem}[section]
\newtheorem*{main-thm}{Main Theorem}
\newtheorem*{Auslander-thm}{Auslander's Theorem}
\newtheorem{cor}[thm]{Corollary}
\newtheorem{lem}[thm]{Lemma}
\newtheorem{prop}[thm]{Proposition}
\theoremstyle{definition}
\newtheorem{defn}[thm]{Definition}
\newtheorem{rem}[thm]{Remark}
\newtheorem{exam}[thm]{Example}
\newtheorem*{problem}{Problem}
\newtheorem{reform}[thm]{Reformulation}
\newcommand{\lxr}{\longrightarrow}
\newcommand{\epic}{\twoheadrightarrow}
\newcommand{\A}{\mathscr A}
\newcommand{\B}{\mathscr B}
\newcommand{\C}{\mathscr C}
\newcommand{\mq}{\mathsf{q}}
\newcommand{\mi}{\mathsf{i}}
\newcommand{\ml}{\mathsf{l}}
\newcommand{\me}{\mathsf{e}}
\newcommand{\ol}[1]{\overline{#1}}
\newcommand{\la}{\langle}
\newcommand{\ra}{\rangle}
\newcommand{\monicc}{\hookrightarrow}
\newcommand{\oo}{\mathcal{o}}
\newcommand{\GG}{\mathcal{G}}
\newcommand{\BQ}{ \mathcal{B}_{ Q} }
\newcommand{\NN}{ \mathcal{N} }
\def\a{\alpha}
\def\b{\beta}
\def\g{\gamma}
\def\d{\delta}
\def\e{\varepsilon}
\def\z{\zeta}
\def\k{\kappa}
\def\l{\lambda}
\def\h{\eta}
\def\th{\theta}
\newcommand{\Ga}{\Gamma}
\DeclareMathOperator*{\Ker}{\mathsf{Ker}}
\DeclareMathOperator{\pd}{\mathsf{pd}}
\DeclareMathOperator*{\id}{\mathsf{id}}
\DeclareMathOperator{\fpd}{\mathsf{fin.dim}}
\DeclareMathOperator*{\findim}{\mathsf{fin.dim}}
\DeclareMathOperator*{\smod}{\mathsf{mod}-\!}
\DeclareMathOperator*{\sMod}{\mathsf{Mod}-\!}
\DeclareMathOperator*{\Proj}{\mathsf{Proj}}
\DeclareMathOperator*{\Tor}{\mathsf{Tor}}
\DeclareMathOperator{\idim}{\mathsf{id}}
\DeclareMathOperator{\tip}{\mathrm{Tip}}
\DeclareMathOperator{\ntip}{\mathrm{NonTip}}
\DeclareMathOperator{\ctip}{\mathrm{CTip}}
\newcommand{\iden}{\operatorname{Id}\nolimits}
\newcommand{\La}{\Lambda}
\newsavebox{\proofbox}
\savebox{\proofbox}{\begin{picture}(7,7)%
	\put(0,0){\framebox(7,7){}}\end{picture}}
\newcommand\isomto{\stackrel{\sim}{\smash{\longrightarrow}\rule{0pt}{0.4ex}}}
\newcommand{\ttt}{\mathfrak{t}}
\newcommand{\sss}{\mathfrak{s}}
\begin{document}
%\linenumbers

%\setprotcode\font
%    {\it \setprotcode\font}
%    {\bf \setprotcode\font}
%    {\bf \it \setprotcode\font}
%    \pdfprotrudechars=1

\title[Monomial arrow removal]
{Monomial arrow removal and the finitistic dimension conjecture}

\author[Erdmann]{Karin Erdmann}
\address{K.~Erdmann\\
	Mathematical Institute\\ 
	24--29 St.\ Giles\\
	Oxford OX1 3LB\\ 
	England}
\email{erdmann@maths.ox.ac.uk}

\author[Giatagantzidis]{Odysseas Giatagantzidis}
\address{O.~Giatagantzidis\\
	Dept of Mathematics\\
	AUTh\\
	54124 Thessaloniki\\
	Greece}
\email{odysgiat@math.auth.gr}

\author[Psaroudakis]{Chrysostomos Psaroudakis}
\address{C.~Psaroudakis\\
	Dept of Mathematics\\
	AUTh\\
	54124 Thessaloniki\\
	Greece}
\email{chpsaroud@math.auth.gr}

\author[Solberg]{\O yvind Solberg}
\address{\O.~Solberg\\
	Dept of Mathematical Sciences\\
	NTNU\\
	N-7491 Trondheim\\
	Norway}
\email{oyvind.solberg@math.ntnu.no}

\date{\today}

\keywords{%
	Finitistic dimension, Bound quiver algebras, Monomial arrow removal, Cleft extensions, (non-commutative) Gr\"{o}bner bases}

\subjclass[2010]{%
	18E, %%% Abelian categories
	%18E30, % Derived categories, triangulated categories
	16E30, % Homological functors on modules (Tor, Ext, etc.)
	%16E40, % (Co)homology of rings and algebras
	%%%%%%%%  (e.g. Hochschild, cyclic, dihedral, etc.)
	16E65; % Homological conditions on rings
	%%%%%%%%  (generalizations of regular, Gorenstein,
	%%%%%%%%  Cohen-Macaulay rings, etc.)
	16E10, % Homological dimension
	16G%%% Representation theory of rings and algebras
	%16G50%% Cohen-Macaulay modules
}

\dedicatory{Dedicated to Claus Michael Ringel on the occasion of his 80th birthday}

\begin{abstract}
	In this paper, we introduce the monomial arrow removal operation for bound quiver algebras, and show that it is a novel reduction technique for determining the finiteness of the finitistic dimension. Our approach first develops a general method within the theory of abelian category cleft extensions. We then demonstrate that the specific conditions of this method are satisfied by the cleft extensions arising from strict monomial arrow removals. This crucial connection is established through the application of non-commutative Gr\"{o}bner bases in the sense of Green. The theory is illustrated with various concrete examples.
\end{abstract}

\maketitle

\setcounter{tocdepth}{1} \tableofcontents

\section{Introduction and main result}

The arrow removal operation was introduced in \cite{GPS} in order to investigate the Finitistic Dimension Conjecture (FDC). It was proved that this operation reduces the question of the finiteness of the finitistic dimension of a finite-dimensional algebra $\Lambda$ to that of its arrow removal, providing thus a reduction technique for the FDC. Recall that the finitistic dimension $\findim\Lambda$ is the supremum of the projective dimensions of all finitely generated right $\Lambda$-modules of finite projective dimension, and the FDC claims that it is finite for all $ \La $ as above. The arrow removal operation was investigated further in \cite{EPS} with respect to Gorensteinness, singularity categories and the finite generation condition for Hochschild cohomology. The key idea of the arrow removal operation is that the module category of a bound quiver algebra $ \La $ and its arrow removal algebra are connected via a cleft extension.

The ultimate goal of our approach is to understand how the arrows of a bound quiver algebra contribute to its finitistic dimension and then find methods to remove the ones that do not. This motivates the following problem, extending the scope of the arrow removal operation:

\begin{problem}
	Determine explicit combinatorial conditions so that an arrow in a relation can be removed establishing further reduction techniques for the FDC.
\end{problem}

In this paper, we continue our investigations toward the above problem. 
Let $\Lambda=kQ/I$ be an admissible quotient of a path algebra $kQ$, and $ \a $ an arrow in $Q$.
We introduce three different kinds of arrow removal based on the existence of a generating set $ T $ for $ I $ of the following types; see also \cref{monomialarrowrem}. We use $ p $ and $ q $ throughout to denote two paths not divided by the arrow $ \a $ such that $ p \a q $ is non-zero in $ k Q $.
	\begin{enumerate}[\rm(i)]
	\item The set $ T $ is \emph{$ \a $-monomial} if every path in the support of every element $ t \in T $ either equals $ t $ (i.e.\ $ t $ is a path) or does not contain the arrow $ \a $ as a subpath;
	\item The set $ T $ is \emph{single $ \a $-monomial} if there are paths $p $ and $ q $ as above such that:
	\begin{enumerate}[\rm(a)]
		\item the path $ p \a q $ is in $ T $, and no path occurring in an element of the set $ T \setminus \{ p \a q \} $ is divided by $ \a $;
		\item no proper subpath of  $p \a q $ is in $ I $;
		\item at most one of $p$ and $q$ is trivial (in order to exlude the arrow removal operation setup).
	\end{enumerate}
	\item The set $ T $ is \emph{strict $ \a $-monomial} if it is single $ \a $-monomial for paths $ p $ and $ q $ as above, and no path occurring in $ T $:
	\begin{enumerate}[\rm(a)]
		\item overlaps with $p$ from the right;
		\item overlaps with $q$ from the left;
		\item divides $p$ or $q$.
	\end{enumerate}
\end{enumerate}

Our main result provides a new reduction technique:

\begin{main-thm}[\mbox{\cref{maincor}{}}]
	Let $\Lambda=kQ/I$ be a bound quiver algebra. Let $ \a $ be an arrow in $Q$ such that $ I $ possesses a strict $ \a $-monomial generating set, and let $ \Ga = \La / \la \ol{ \a } \ra $ where $ \la \ol{ \a } \ra $ denotes the ideal of $ \La $ generated by $ \ol{ \a } = \a + I $. Then
	\[
	\findim\Lambda \leq \findim\Gamma + 2 .
	\]
In particular, it holds that $ \fpd \Ga < \infty $ implies $ \fpd \La < \infty $.
\end{main-thm}

The proof of the above theorem is an application of \cref{thm:findimcleftextab}, which is a general result for the finitistic dimension in the context of abelian category cleft extensions. A key assumption in \cref{thm:findimcleftextab} is the existence of common bounds for the projective dimensions of objects in certain functor images, which capture the homological complexity of the cleft extension.
Moreover, the cleft extension of module categories associated to a monomial arrow removal situation satisfies the aforementioned homological properties, a fact which we establish via non-commutative Gr\"{o}bner bases in the sense of Green \cite{Green2}{}; see \cref{subsec:gr.bases}.

It should also be noted that $\fpd \La < \infty $ implies $\fpd \Ga < \infty $ in the context of our main theorem, see \cite[Theorem~3.8]{Giata}{}. While the second author develops a more comprehensive reduction technique there, it is not combinatorially accessible in general due to the inherent homological nature of the assumptions.

The contents of the paper section by section are as follows. \cref{sec:mon.ar.rem} is divided into two subsections. In \cref{subsec:mon.ar.rem}{}, we introduce strict monomial arrow removal algebras and their basic properties. Furthermore, we prove that the monomial arrow removal operation induces a cleft extension of module categories, see \cref{propcleftextension}.
In \cref{subsec:gr.bases}{}, we introduce strict monomial Gr\"{o}bner bases, which provide the most general context for our reduction technique. Furthermore, we show that the existence of such a Gr\"{o}bner basis is guaranteed whenever we have a strict monomial arrow removal, see \cref{lem:1}{}. The converse is not true is general as we illustrate in \cref{exam:1''}{}.

In \cref{sectioncleftextabcat}, we briefly recall the abstract categorical framework of cleft extensions of abelian categories together with some useful homological properties, and prove \cref{thm:1}. In \cref{sectionproofofmainthm}, we prove the main result of this paper based on \cref{thm:1}. In particular, we explicitly calculate the minimal projective resolutions of certain key bimodules; see \cref{lem:2}{} and \cref{lem:3}{}. 
Finally, in \cref{sectionexamples}, we recall as \cref{defn:reduced} the class of reduced algebras introduced in \cite{GPS}, and finish the paper by showing the finiteness of the finitistic dimension of various concrete reduced algebras using our reduction technique.

\subsection*{Acknowledgments} 
The present research project was partly supported by the Hellenic Foundation for Research and Innovation (H.F.R.I.). Specifically, the second author received support under the ``3rd Call for H.F.R.I.\ Ph.D.\ Fellowships'' (FN: 47510/03.04.2022). For the third author, the research project was implemented in the framework of the H.F.R.I.\ call ``Basic Research Financing (Horizontal Support of all Sciences)" under the National Recovery and Resilience Plan ``Greece 2.0" funded by the European Union - NextGenerationEU (H.F.R.I.\ Project Number: 16785).

\section{Monomial arrow removal and Gr{\"o}bner bases}		\label{sec:mon.ar.rem}

Throughout the rest of the article we fix an admissible quotient $\La = k Q / I$ of the path algebra $kQ$, that is we fix a finite quiver $Q$, a field $k $ and an admissible ideal $I$ of $k Q$. Furthermore, we fix an arrow $ \a $.

In this section, we introduce the notion of a strict $ \a $-monomial arrow removal algebra of $ \La $ and prove that, if $\La / \la \ol{ \a } \ra $ is such an algebra, then there is a Gr{\"o}bner basis for $I$ satisfying some special properties that will be essential in the proof of our main theorem.

\subsection{Monomial arrow removal}		\label{subsec:mon.ar.rem}

In this subsection, we consider three conditions for a generating set of the admissible ideal $ I $ of $k Q$, and establish some consequences.

In what follows, we say that a path \emph{avoids} an arrow if it is not divided by it. Similarly, an element of $ k Q $ avoids an arrow if this is the case for every path occurring in the element. Even more generally, a subset of $ k Q $ avoids an arrow if this is the case for every element in it. Similarly, we say that a path occurs in some subset of $ k Q $ if it occurs in some element of the subset. Finally, a \emph{relation} is as usual a linear combination of paths of length at least two, where all paths have the same source and the same target.

\begin{defn}
	\label{defn:1}		\hypertarget{defn:1}{}
	Let $ T $ be a finite generating set of relations for the admissible ideal $ I $ of $k Q$. Let $ p $ and $ q $ be two paths avoiding the fixed arrow $ \a $ such that $t(p) = s ( \a )$ and $s(q ) = t( \a )$.
	\begin{enumerate}[\rm(i)]
		\item The set $ T $ is \emph{$ \a $-monomial} if every element in it that is not a path avoids $ \a $.
		\item The set $ T $ is \emph{single $ \a $-monomial} if there are paths $p $ and $ q $ as above such that:
			\begin{enumerate}[\rm(a)]
				\item $p \a q \in T $ and the set $ T \setminus \{ p \a q \} $ avoids $ \a $;
				\item no proper subpath of $p \a q $ is in the ideal generated by $ T $;
				\item at most one of $p$ and $q$ is trivial.
			\end{enumerate}
		\item The set $ T $ is \emph{strict $ \a $-monomial} if it is single $ \a $-monomial for paths $ p $ and $ q $ as above, and no path occurring in $ T $:
		\begin{enumerate}[\rm(a)]
			\item overlaps with $p$ from the right;
			\item overlaps with $q$ from the left;
			\item divides $p$ or $q$.		\phantomsection		\hypertarget{c}{}
		\end{enumerate}
	\end{enumerate}
	We write $ T = T_{ p , q }$ in cases (ii) and (iii) to indicate the paths $p$ and $q$.
\end{defn}

\begin{rem}
	If $s$, $t$ are two paths, then \emph{$ s $ overlaps with $ t $ from the left} or, equivalently, the path \emph{$ t $ overlaps with $ s $ from the right}, if there are paths $r_1$, $r_2$ and $r_3$ such that $r_2$ is non-trivial, such that $t = r_1 r_2$ and $s = r_2 r_3 $. Equivalently, the path $ s $ overlaps with $ t $ from the left if there are paths $u$, $v$ such that $t v = u s $, where $s $, $ t$ do not divide $v$ and $u$, respectively.
	Note that if one of the paths $p$ and $q$ in \cref{defn:1}{} is trivial, then the corresponding no-overlapping condition is automatically true as a trivial path does not overlap with any path.
\end{rem}

\begin{rem}
	The condition that $ T $ is finite and consists of relations is imposed for technical reasons only. Indeed, if $ T $ is any set generating an admissible ideal $I$ of $k Q$, then $T' = \{ e_i t e_j \colon i , j \in Q_0 , t \in T \} $ is also a generating set for $I$ consisting of relations. Since $I$ is finitely generated as an ideal of $k Q$ (see for instance \cite[Lemma II.2.8]{ASS}{}), there is a finite subset $ T'' \subseteq T'$ generating $I$. Moreover, if $T$ satisfies any of the conditions in \cref{defn:1}{}, then the same holds for $T''$.
\end{rem}

We are now ready to give the definition of a (strict, single) monomial arrow removal algebra.

\begin{defn}
	\label{monomialarrowrem}
	The quotient algebra $ \La / \la \ol{ \a } \ra $ is a \emph{(strict or single) monomial arrow removal of $\La $} if $I$ is generated by a (strict or single, respectively) $ \a $-monomial set.
\end{defn}

In the following example we provide an instance of a single monomial arrow removal algebra. An example of a strict monomial arrow removal is deferred to \cref{sectionexamples}{}.

\begin{exam}
	\label{exam:1}
	Let $Q$ be the quiver of the following figure. Let $ T $ be the set consisting of:
	\begin{enumerate}[\rm(i)]
		\item the paths $\theta \a \b \g_1 \d_1 $ and $ \e \z $, and $\g_2 \d_2 - \g_1 \d_1 $;
		\item the paths of the form $\l_i^{j_i}$ for integers $j_i \geq 2 $, where $i = 1, 2, \ldots, 9 $;
		\item all the paths of length two involving one loop and one non-loop arrow except for the paths $\l_1 \a $, $ \a \l_2 $, $\d_1 \l_6 $, $\d_2 \l_6 $ and $\l_9 \theta $.
	\end{enumerate}
	It is not difficult to see that $ \La = k Q / I $ is a bound quiver algebra, where $ I $ is the ideal of $ k Q $ generated by $ T $ and $ k $ is any field. Indeed, it suffices to see that the maximal paths in $ Q $ starting with $\theta$ that are not in $ I $ are the paths $\theta \a \l_2^{j_2 - 1}$, $\theta \a \b \g_1 $ and $\theta \a \b \g_2 $.

	\smallskip
	\begin{figure}[h!]
		\centering
		\begin{tikzpicture}	%	[scale=0.80]

			%%%%%%		VERTICES (with LABELS)

			%\draw  ($(0,0)+(67.5:2)$) {$\mathbf{1}$};
			\node at ($(0,0)+(67.5:2)$) {$\mathbf{1}$};
			
			%\draw  ($(0,0)+(112.5:2)$) circle (.08);
			\node at ($(0,0)+(112.5:2)$) {$\mathbf{2}$};
			
			%\draw  ($(0,0)+(157.5:2)$) circle (.08);
			\node at ($(0,0)+(157.5:2)$) {$\mathbf{3}$};

			%	\draw  ($(0,0)+(202.5:2)$) circle (.08);
			%	\node at ($(0,0)+(202.5:2.3)$) {$\mathbf{4}$};
			
			%	\draw  ($(0,0)+(202.5:1)$) circle (.08);
			%	\node at ($(0,0)+(202.5:0.7)$) {$\mathbf{5}$};

			\path 
			($(0,0)+(157.5:2)$)			coordinate (3)      
			($(0,0)+(247.5:2)$)			coordinate (6)
			;
			
			\coordinate (Mid 3 6) at ($(3)!0.5!(6)$);

			%\draw  ($(Mid 3 6)+(202.5:0.7)$) circle (.08);
			\node at ($(Mid 3 6)+(202.5:0.7)$) {$\mathbf{4}$};
			
			%\draw  ($(Mid 3 6)+(22.5:0.7)$) circle (.08);
			\node at ($(Mid 3 6)+(22.5:0.7)$) {$\mathbf{5}$};

			%\draw  ($(0,0)+(247.5:2)$) circle (.08);
			\node at ($(0,0)+(247.5:2)$) {$\mathbf{6}$};
			
			%\draw  ($(0,0)+(292.5:2)$) circle (.08);
			\node at ($(0,0)+(292.5:2)$) {$\mathbf{7}$};
			
			%\draw  ($(0,0)+(337.5:2)$) circle (.08);
			\node at ($(0,0)+(337.5:2)$) {$\mathbf{8}$};
			
			%\draw  ($(0,0)+(22.5:2)$) circle (.08);
			\node at ($(0,0)+(22.5:2)$) {$\mathbf{9}$};

			%%%%%%%%		ARROWS  (except for loops)

			\draw[->,shorten <=6pt, shorten >=6pt] ($(0,0)+(67.5:2)$) arc (67.5:112.5:2);
			
			\draw[->,shorten <=6pt, shorten >=6pt] ($(0,0)+(112.5:2)$) arc (112.5:157.5:2);

			\draw[->,shorten <=6pt, shorten >=6pt]  ($(0,0)+(157.5:2)$) -- ($(Mid 3 6)+(202.5:0.7)$);
			
			\draw[->,shorten <=6pt, shorten >=6pt]  ($(0,0)+(157.5:2)$) -- ($(Mid 3 6)+(22.5:0.7)$);

			\draw[->,shorten <=6pt, shorten >=6pt] ($(Mid 3 6)+(202.5:0.7)$) -- ($(0,0)+(247.5:2)$);
			
			\draw[->,shorten <=6pt, shorten >=6pt] ($(Mid 3 6)+(22.5:0.7)$)  -- ($(0,0)+(247.5:2)$);

			\draw[->,shorten <=6pt, shorten >=6pt] ($(0,0)+(247.5:2)$) arc (247.5:292.5:2);
			
			\draw[->,shorten <=6pt, shorten >=6pt] ($(0,0)+(292.5:2)$) arc (292.5:337.5:2);
			
			\draw[->,shorten <=6pt, shorten >=6pt] ($(0,0)+(337.5:2)$) arc (337.5:382.5:2);
			
			\draw[->,shorten <=6pt, shorten >=6pt] ($(0,0)+(22.5:2)$) arc  (22.5:67.5:2);

			%%%%%%%%%		LOOPS

			\draw[->,shorten <=6pt, shorten >=6pt] ($(0,0)+(67.5:2)$).. controls +(67.5+41:1.2) and +(67.5-41:1.2) .. ($(0,0)+(67.5:2)$) ;
			\node at ($(0,0)+(67.5:3)$) {$\l_1$};
			
			\draw[->,shorten <=6pt, shorten >=6pt] ($(0,0)+(112.5:2)$).. controls +(112.5+41:1.2) and +(112.5-41:1.2) .. ($(0,0)+(112.5:2)$) ;
			\node at ($(0,0)+(112.5:3)$) {$\l_2$};
			
			\draw[->,shorten <=6pt, shorten >=6pt] ($(0,0)+(157.5:2)$).. controls +(157.5+41:1.2) and +(157.5-41:1.2) .. ($(0,0)+(157.5:2)$) ;
			\node at ($(0,0)+(157.5:3)$) {$\l_3$};

			\draw[->,shorten <=6pt, shorten >=6pt] ($(Mid 3 6)+(202.5:0.7)$).. controls +(202.5+41:1.2) and +(202.5-41:1.2) .. ($(Mid 3 6)+(202.5:0.7)$) ;
			\node at ($(0,0)+(202.5:3.05)$) {$\l_4$};
			
			\draw[->,shorten <=6pt, shorten >=6pt] ($(Mid 3 6)+(22.5:0.7)$).. controls +(22.5+41:1.2) and +(22.5-41:1.2) .. ($(Mid 3 6)+(22.5:0.7)$) ;
			\node at ($(0,0)+(22.5:0.3)$) {$\l_5$};

			\draw[->,shorten <=6pt, shorten >=6pt] ($(0,0)+(247.5:2)$).. controls +(247.5+41:1.2) and +(247.5-41:1.2) .. ($(0,0)+(247.5:2)$) ;
			\node at ($(0,0)+(247.5:3)$) {$\l_6$};
			
			\draw[->,shorten <=6pt, shorten >=6pt] ($(0,0)+(292.5:2)$).. controls +(292.5+41:1.2) and +(292.5-41:1.2) .. ($(0,0)+(292.5:2)$) ;
			\node at ($(0,0)+(292.5:3)$) {$\l_7$};
			
			\draw[->,shorten <=6pt, shorten >=6pt] ($(0,0)+(337.5:2)$).. controls +(337.5+41:1.2) and +(337.5-41:1.2) .. ($(0,0)+(337.5:2)$) ;
			\node at ($(0,0)+(337.5:3)$) {$\l_8$};
			
			\draw[->,shorten <=6pt, shorten >=6pt] ($(0,0)+(22.5:2)$).. controls +(22.5+41:1.3) and +(22.5-41:1.3) .. ($(0,0)+(22.5:2)$) ;
			\node at ($(0,0)+(22.5:3.05)$) {$\l_9$};

			%%%%%%%		ARROW LABELS

			\node at ($(0,0)+(90:2.25)$) {$\alpha$};
			
			\node at ($(0,0)+(135:2.25)$) {$\beta$};

			\coordinate (gamma 1) at ($(3)!0.5!($(Mid 3 6)+(202.5:0.7)$)$);
			
			\coordinate (gamma 2) at ($(3)!0.5!($(Mid 3 6)+(22.5:0.7)$)$);
			
			\coordinate (delta 1) at ($($(Mid 3 6)+(202.5:0.7)$)!0.5!(6)$);
			
			\coordinate (delta 2) at ($($(Mid 3 6)+(22.5:0.7)$)!0.5!(6)$);

			\node at ($(gamma 1)+(170:0.25)$) {$\gamma_1$};
			
			\node at ($(gamma 2)+(45:0.25)$) {$\gamma_2$};

			\node at ($(delta 1)+(235:0.25)$) {$\delta_1$};
			
			\node at ($(delta 2)+(10:0.31)$) {$\delta_2$};

			\node at ($(0,0)+(270:2.25)$) {$\varepsilon$};			
			
			\node at ($(0,0)+(315:2.25)$) {$\zeta$};
			
			\node at ($(0,0)+(0:2.25)$) {$\eta$};
			
			\node at ($(0,0)+(45:2.25)$) {$\theta$};

		\end{tikzpicture}	
	\end{figure}

	We begin by showing that the only possible paths $p$ and $q$ such that there is a single $\a$-monomial generating set $T = T_{p , q}$ of $ I $ are $p = \theta $ and $q = \b \g_1 \d_1 $ or $q = \b \g_2 \d_2 $. The maximal non-zero paths of $ \La $ that end at vertex $ 1 $ are the paths $\l_1 ^{ j_1 - 1}$, $\l_9 ^{ j_9 - 1} \theta $ and $ \z \h \th $. Therefore, path $p$ must divide one of the above paths from the right. Similarly, 
	the maximal non-zero paths of $\La_1$ starting at vertex 2 are the paths $\l_2 ^{ j_2 - 1 } $, $ \b \g_1 \d_1 \l_6 ^{ j_6 - 1 }$ and $ \b \g_1 \d_1 \e $, and $q$ has to divide one of them, or their variants where the subpath $\g_1 \d_1 $ is replaced by the path $\g_2 \d_2 $, from the left. If $p $ divides $ \l_1 ^{ j_1 - 1}$ and $q$ is any of the aforementioned candidate paths, then $p \a q \in I$ implies that $q \in I$, a contradiction. Therefore, path $p $ is divided by $ \th $ from the right. A similar argument shows that $q $ cannot be a subpath of $\l_2 ^{ j_2 -1 } $. Furthermore, if we take $ q $ to be $\b \g_1 $ or $\b \g_2 $ then, for any candidate path $p$, it holds that $p \a q \notin I$. We deduce that $q $ has to be divided by $\b \g_1 \d_1 $ or $\b \g_2 \d_2 $ from the left. Since $\th \a \b \g_1 \d_1 = \th \a \b \g_2 \d_2 \in I$, it holds that $ p = \th $ and $q = \b \g_1 \d_1 $ or $q = \b \g_2 \d_2 $.
	
	Note that the above arguments show that $ T = T_{p , q} $ is a single $ \a $-monomial generating set for $ I $, where $ p = \th $ and $ q = \b \g_1 \d_1 $. Indeed, it follows that no proper subpath of $ p \a q $ is in $ I $. However, the generating set $ T $ is not strict $ \a $-monomial as the path $ \g_1 \d_1 $ overlaps from the left with and divides $q$. Moreover, the ideal $ I $ possesses no strict $\a$-monomial generating set as the paths $ \g_1 \d_1 $ and $\g_2 \d_2 $ occur in every generating set for $I_1$, their length being $ 2 $. In particular, the quotient algebra $ \La / \la \ol{ \a } \ra $ is a single but not strict monomial arrow removal of $ \La $.
\end{exam}

Now consider the following properties for a subset $T$ of the path algebra $k Q$, which will be referred to collectively as properties \hyperlink{P1}{(P)}. Recall that $p $ and $ q $ denote two paths that avoid the fixed arrow $ \a $, and such that $t(p ) = s ( \a )$ and $ s( q ) = t ( \a )$.
Furthermore, note that we say that a path occurs in a subset of $ k Q $ if it occurs in some element of the subset.
 
\begin{enumerate}
	\item[(P1)\phantom{$^{\textrm{op}}$}]	\phantomsection	\hypertarget{P1}{}		The set $T \setminus \{ p \a q \} $ avoids $ \a $.
	\item[(P2)\phantom{$^{\textrm{op}}$}]	\phantomsection	\hypertarget{P2}{}		The path $ p $ does not overlap from the left with any path occurring in $T$.
	\item[(P2)$^{\textrm{op}}$]				\phantomsection	\hypertarget{P2op}{}	The path $ q $ does not overlap from the right with any path occurring in $T$.
	\item[(P3)\phantom{$^{\textrm{op}}$}]	\phantomsection	\hypertarget{P3}{}		No path occurring in $T$ divides $p $ or $ q $.
\end{enumerate}

An element $t \in k Q$ satisfies any of the above properties if $T = \{ t \} $ does, by definition. For instance, the element $t$ satisfies pro\-per\-ty \hyperlink{P1}{(P1)} if $t = p \a q $ or $t $ avoids $ \a $. In particular, the set $ T $ is allowed not to include the path $p \a q $ in the context of property \hyperlink{P1}{(P1)}. If it holds in addition that $ p \a q \in T $, then we say that $ T $ satisfies property \hyperlink{P1}{(P1)} \emph{non-trivially}. Similarly, we say that $ T $ satisfies properties \hyperlink{P1}{(P)} \emph{non-trivially} if it satisfies properties \hyperlink{P1}{(P)} and it satisfies property \hyperlink{P1}{(P1)} non-trivially.

We proceed by showing that requiring condition (b) of \cref{defn:1}{}.(ii) is redundant in the definition of a strict $\a$-monomial generating set in the same definition.

\begin{lem}
	\label{lem:0}
	A finite generating set of relations $ T $ for $ I $ is strict $ \a $-monomial if and only if it satisfies properties \hyperlink{P1}{\textup{(P)}} non-trivially.
\end{lem}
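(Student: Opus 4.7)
The plan is to verify the equivalence by matching \cref{defn:1}(ii)--(iii) against properties \hyperlink{P1}{(P)}. The forward direction $(\Rightarrow)$ is tautological: \hyperlink{P1}{(P1)} non-trivially is condition (a) of \cref{defn:1}(ii), while \hyperlink{P2}{(P2)}, \hyperlink{P2op}{(P2)}$^{\textrm{op}}$, and \hyperlink{P3}{(P3)} are conditions (a), (b), (c) of \cref{defn:1}(iii). For the converse, the two remaining items of the single $\a$-monomial condition, namely (b) and (c) of \cref{defn:1}(ii), must be deduced from \hyperlink{P1}{(P)}-non-trivially. Condition (c) is immediate: since $T$ consists of relations, $p\a q \in T$ has length at least two, so at most one of $p$, $q$ can be trivial.

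The real content is condition (b), that no proper subpath of $p\a q$ lies in $I$, and I would argue this by contradiction. Suppose $w$ is a proper subpath of $p\a q$ with $w \in \langle T \rangle = I$. Writing the membership explicitly and expanding both the coefficients and the generators of $I$ into $k$-linear combinations of paths yields an equality in $k Q$ of the form
\[
w \; = \; \sum_{n,l} c_{n,l}\, u_n\, \rho_{n,l}\, v_n ,
\]
where the $\rho_{n,l}$ are paths occurring in $T$ and the $u_n, v_n$ are paths. Fixing any admissible monomial order on the paths of $kQ$ (length-lexicographic, say), the tip of the left-hand side is $w$, so the maximal monomial on the right-hand side with non-zero net coefficient must equal $w$. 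Hence for some pair $(n,l)$ one has $u_n \rho_{n,l} v_n = w$, and therefore some path $\rho$ occurring in $T$ is a subpath of $w$, and a fortiori of $p\a q$.

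To close the argument, I would perform a short case analysis on $\rho$. By \hyperlink{P1}{(P1)} non-trivially, every path occurring in $T$ either is $p\a q$ itself or avoids $\a$. If $\rho = p\a q$, then $|\rho| > |w|$ since $w$ is a proper subpath, contradicting that $\rho$ is a subpath of $w$. If instead $\rho$ avoids $\a$, then, because $\a$ occurs only in the middle of $p\a q$, the path $\rho$ must lie entirely in $p$ or entirely in $q$, contradicting \hyperlink{P3}{(P3)}. Either alternative leads to a contradiction, completing the proof of (b). The step I expect to be the main obstacle is the tip-reduction — a standard manoeuvre in non-commutative Gr\"{o}bner basis theory, but one which ought to be spelled out carefully here since Gr\"{o}bner bases are only introduced in the following subsection; after that, the combinatorial case analysis is effortless. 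Note that \hyperlink{P2}{(P2)} and \hyperlink{P2op}{(P2)}$^{\textrm{op}}$ play no role in the converse direction, as they encode strictness conditions of \cref{defn:1}(iii) that are already imposed by \hyperlink{P1}{(P)}.
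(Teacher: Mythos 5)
Your proof is correct and follows essentially the same route as the paper's: reduce everything to verifying condition (ii)(b) of \cref{defn:1}{} (plus the easy (ii)(c)), expand a purported subpath $w\in I$ against the generating set $T$, extract a path $\rho$ occurring in $T$ that divides $w$, and derive a contradiction from \hyperlink{P1}{(P1)} and \hyperlink{P3}{(P3)}.

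One reassurance on the step you flagged as the main obstacle: no admissible order, tip, or Gr\"obner-basis machinery is actually needed there. In the expansion $w=\sum_{n,l}c_{n,l}\,u_n\rho_{n,l}v_n$, the left-hand side is the single path $w$, so the net coefficient of $w$ on the right-hand side equals $1$; hence at least one term satisfies $u_n\rho_{n,l}v_n=w$ outright, which already gives a path $\rho$ occurring in $T$ dividing $w$. This is a pure coefficient-comparison argument and avoids the well-ordering entirely. Incidentally, your treatment is slightly more complete than the paper's, which writes the proper subpath as $p_2\a q_1$ and thereby only addresses subpaths still containing $\a$; your case analysis also covers subpaths of $p$ or of $q$ alone, which are eliminated by \hyperlink{P3}{(P3)} in exactly the same way.
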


\begin{proof}
	We only have to show that no proper subpath of $p \a q $ is in $I$ if $ T = T_{ p , q } $ is a finite set of relations satisfying relations \hyperlink{P1}{(P)} non-trivially. Let $p = p_1 p_2 $ and $q = q_1 q_2$ where at least one of $p_2$ and $q_1$ is a proper subpath, and note that $p_2 \a q_1 $ cannot be generated by $p \a q$, i.e.\ the path $p_2 \a q_1 $ cannot be in the ideal of $ k Q $ generated by $ p \a q $. Therefore, if $p_2 \a q_1 $ is in $I$, then there is a path $t$ occurring in $ T \setminus \{ p \a q \} $ that divides $ p_2 \a q_1$. Since $ t $ avoids $ \a $ according to property \hyperlink{P1}{(P1)}, it holds that $t$ divides $p_2$ or $q_1$, a contradiction to property \hyperlink{P3}{(P3)}.
\end{proof}

We close this subsection by showing that, if $\La / \la \ol{ \a } \ra $ is a monomial arrow removal of $\La$, then the two algebras form a cleft extension. The following result, whose proof is included for the sake of completeness, is a particular case of various results found in \cite[Sections~3~and~4]{Giata}.

\begin{prop}
	\label{propcleftextension}
	If the quotient algebra $ \Ga = \Lambda/ \langle \overline{ \a } \rangle$ is a monomial arrow
	removal of the bound quiver algebra $\La = k Q / I $, then $ \Ga \simeq k Q^* / I^* $ where $Q^*$ is the quiver $Q$ with the arrow $ \a $
	removed and $I^* = kQ^* \cap I$. Moreover, the inclusion of $Q^*$ into $ Q$ induces an algebra monomorphism $\nu \colon
	\Gamma \monicc \Lambda$, the projection of $Q $ onto $ Q^*$ induces an algebra epimorphism $\pi \colon \Lambda \epic \Gamma$, and $ \pi \nu = \id_\Ga $.
\end{prop}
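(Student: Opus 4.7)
The plan is to identify $\Gamma$ with a concrete quotient of $kQ^*$ and then exhibit the maps $\nu$ and $\pi$ explicitly. Write $\langle\alpha\rangle$ for the two-sided ideal of $kQ$ generated by $\alpha$. Under the canonical quotient $kQ \twoheadrightarrow \Lambda$, the preimage of the ideal $\langle\overline{\alpha}\rangle$ of $\Lambda$ is $I + \langle\alpha\rangle$, so the third isomorphism theorem yields
\[
\Gamma \;=\; \Lambda/\langle\overline{\alpha}\rangle \;\cong\; kQ/(I+\langle\alpha\rangle) \;\cong\; \bigl(kQ/\langle\alpha\rangle\bigr)\big/\bigl((I+\langle\alpha\rangle)/\langle\alpha\rangle\bigr).
\]
There is an obvious $k$-algebra isomorphism $kQ/\langle\alpha\rangle \cong kQ^*$: the composition $kQ \twoheadrightarrow kQ^*$, sending paths containing $\alpha$ to $0$ and fixing all other paths, is surjective with kernel $\langle\alpha\rangle$. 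Denote this map by $\rho$. Then $\Gamma \cong kQ^*/\rho(I)$, and the task reduces to identifying $\rho(I)$ with $I^* := kQ^* \cap I$.

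First, the inclusion $\rho(I) \supseteq I^*$ is immediate since $\rho$ fixes every element of $kQ^* \subseteq kQ$. For the reverse inclusion, this is where the monomial hypothesis enters. Fix an $\alpha$-monomial generating set $T$ for $I$, so that each $t \in T$ either is a path in $Q$ or avoids $\alpha$. Given $x \in I$, write $x = \sum_i a_i t_i b_i$ with $t_i \in T$ and $a_i,b_i \in kQ$. Because $\rho$ is a $k$-algebra homomorphism that annihilates every path containing $\alpha$, a case analysis on whether $t_i$ contains $\alpha$ shows that
\[
\rho(x) \;=\; \sum_{t_i \,\in\, T \cap kQ^*} \rho(a_i)\, t_i\, \rho(b_i),
\]
which lies in the ideal of $kQ^*$ generated by $T \cap kQ^*$. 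Since $T \cap kQ^* \subseteq I$, this ideal is contained in $I^*$. Hence $\rho(I) = I^*$, and we conclude $\Gamma \cong kQ^*/I^*$.

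For the remaining assertions, let $\iota : kQ^* \hookrightarrow kQ$ be the inclusion induced by $Q^* \subseteq Q$. Composing with $kQ \twoheadrightarrow \Lambda$ gives a homomorphism $kQ^* \to \Lambda$ that sends $I^* = kQ^* \cap I$ to zero, and therefore factors through a $k$-algebra homomorphism $\nu : \Gamma \to \Lambda$. An element of $\ker \nu$ is represented by some $w \in kQ^*$ with $w \in I$, so $w \in I^*$, proving $\nu$ is injective. The epimorphism $\pi : \Lambda \twoheadrightarrow \Gamma$ is simply the quotient by $\langle\overline{\alpha}\rangle$; under the isomorphism $\Gamma \cong kQ^*/I^*$ it sends the class of a path $w$ to the class of $w$ if $w \in kQ^*$ and to $0$ otherwise, which is precisely the map induced by the projection $Q \to Q^*$. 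The identity $\pi\nu = \mathrm{id}_\Gamma$ then follows by tracking a generator $w + I^* \in \Gamma$ with $w \in kQ^*$: $\nu(w+I^*) = w + I$, and $\pi(w+I) = w + I^*$ because $\rho(w) = w$. The main (and only nontrivial) obstacle is the identification $\rho(I) = I^*$, which fails for arbitrary generating sets and is precisely what the $\alpha$-monomial hypothesis is designed to secure.
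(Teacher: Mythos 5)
Your proof is correct and takes essentially the same approach as the paper's: both hinge on showing that the projection $kQ\twoheadrightarrow kQ^*$ carries $I$ into $I^* = kQ^*\cap I$, which follows because the $\alpha$-monomial hypothesis forces each generator to be either entirely annihilated by the projection or fixed by it. The paper phrases this via the direct sum decomposition $kQ = \langle\alpha\rangle\oplus kQ^*$ and a commutative diagram, whereas you invoke the third isomorphism theorem and compute $\rho(I)$ directly, but the substance is identical.
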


\begin{proof}
Consider the commutative diagram of $k$-algebra homomorphisms
	\[\xymatrix{
		kQ^* \ar@{^(->}[r]^{\nu^*} \ar@{->>}[d]^{p^*} & kQ\ar@{->>}[d]^p  \ar@{->>}[r]^{\pi^*}   & k Q^*  \ar@{->>}[d]^{p^*} \\
		k Q^* / I^*  \ar@{^(->}[r]^{\nu'} & k Q / I \ar@{->>}[r]^-{\pi'} & k Q^* / I^*  \notag
	}\]
	where $\nu^*$ and $\pi^*$ are induced by the inclusion of $Q^* $ into $Q$ and the projection of $Q $ onto $Q^*$, respectively, and $\nu' $ is induced by $\nu^*$ since $I^* \subseteq I$ by definition. We have to show that $\pi^* (I ) \subseteq I^*$ in order for $\pi' $ to be a well-defined algebra epimorphism.
	
	Let $S$ be an $ \a $-monomial generating set of relations for $I$ and note that the vector space $kQ$ decomposes as $\langle \a \rangle \oplus kQ^*$.
	The ideal
	generated by the monomial relations in $S$ passing through $ \a $ is contained
	in $\langle \a \rangle$.  On the other hand, if $S'$ is the subset of $S$ that contains all relations of $S$ that avoid $ \a $, then the ideal of $k Q$ generated by $S'$ is 
	given as
	\[(\langle \a \rangle \oplus { kQ^* } ) S' (\langle \a \rangle \oplus { kQ^* } ) = [ ( \langle \a \rangle  S' { k Q } ) + ( { k Q } S' \langle \a \rangle ) ] \oplus ( { kQ^* } S' { kQ^* } ) \]
	and, thus,  it is contained in the ideal $\langle \a \rangle + I^*$ of $ k Q $. In particular, the ideal $ \la \a \ra + I  $ of $k Q $ is contained in $ \la \a \ra + I^*$, and $\pi^* ( z ) \in I^*$ for every $z \in I$.
	We deduce that $\pi'$ is well-defined. Furthermore, the equality $\pi' \nu' = \id_{ k Q^* / I^*} $ follows immediately from the equality $\pi^* \nu^* = \id_{k Q^* }$. 
	
	Observe now that it is also true that the ideal $ \la \a \ra + I^* = \la \a \ra \oplus I^* $ is contained in $ \la \a \ra + I$ by the definition of $I^*$. Therefore we have the equality $\langle \a \rangle + I
	= \langle \a \rangle \oplus I^*$.  This
	implies in particular the isomorphisms
	\begin{align}
		\Ga = \Lambda/\langle \overline{ \a } \rangle & \simeq kQ/(\langle \a \rangle + I)\notag\\
		& \simeq (\langle \a \rangle \oplus kQ^*)/(\langle \a \rangle \oplus
		I^*)\notag\\
		& \simeq kQ^*/I^* \notag
	\end{align}
	as $k$-algebras. It remains to set $\nu = \nu' \phi $ and $ \pi = \phi^{-1} \pi' $ for an isomorphism $ \phi \colon \Ga \isomto k Q^* / I^* $.
\end{proof}

\subsection{Gr{\"o}bner bases}		\label{subsec:gr.bases}

In this subsection, we introduce the notion of a strict $\a$-monomial Gr{\"o}bner basis for $I$. Moreover, we prove that, if $\La / \la \ol{ \a } \ra $ is a strict monomial arrow removal of $\La = k Q / I$, then the ideal $I$ possesses such a Gr{\"o}bner basis with respect to any admissible order on the set of paths in $Q$.

We begin by recalling some basic definitions and facts about Gr{\"o}bner bases in the sense of Green \cite{Green2}{}. The interested reader may work through \cref{exam:1'}{} in order to familiarize themselves with Gr\"{o}bner bases. We have included a detailed account of all the necessary calculations for this purpose. 

For the rest of this subsection, we fix an admissible order ``$\preceq $'' on $\BQ$, the set of paths in $Q$. We recall that an order ``$\preceq $'' on $\BQ $ is called \emph{admissible} if it satisfies the following conditions, where $ p , q, r \in  \BQ $.
	\begin{enumerate}[\rm(i)]
		\item It is a well-order.
		\item For every $p \preceq q $, we have $ p r \preceq q r $ whenever both $ p r $ and $ q r $ are non-zero.
		\item For every $p \preceq q $, we have $ r p \preceq r q $ whenever both $ r p $ and $ r q $ are non-zero.
		\item If $ r $ is a subpath of $ p $, then $ r \preceq p $.
	\end{enumerate}
	See also \cite[Subsection~2.2.2]{Green2}{}.

For every non-zero element $z = \sum \mu_p \cdot p \in k Q$, where $p $ ranges over $\BQ$ and almost all coefficients $\mu_p \in k$ are zero, the \emph{tip} of $z$ is defined to be
\[
\tip (z ) = \max{}_\preceq \{ p \in \BQ \, | \, \mu_p \neq 0 \}
\]
and $\ctip (z) $ is the coefficient of $\tip (z)$ in $z $. Furthermore, if $ T $ is any subset of $k Q$, then $\tip ( T ) = \{ \tip ( z) \, | \, z \in T \setminus \{0  \}  \}$ and $\ntip( T ) = \BQ \setminus \tip ( T )$.
A key fact is that $\ntip (I) + I$ is a $k$-basis of $\La$, consisting of the cosets of paths that do not occur as the tip of an element in $I$. Therefore, for every element $z \in k Q$ there is a unique element $N ( z ) \in {}_k \la \ntip (I) \ra $ such that $z - N ( z ) \in I$, called the \emph{normal form of $z$}. It should also be noted that the set of paths $\ntip ( I )$ enjoys the special property of being closed under subpaths.

Recall that a subset $ \GG \subseteq I$ is a \emph{Gr{\"o}bner basis} for $I$ if, for every $z \in I$, it holds that $\tip (g) $ divides $\tip (z) $ for some $g \in \GG $; see \cite[Subsection~2.2.3]{Green2}{}. Equivalently, the set $\GG$ is a Gr{\"o}bner basis for $I$ if $\tip (\GG)$ and $\tip (I)$ generate the same ideal of $ k Q $. It follows that a Gr{\"o}bner basis for $I$ is in particular a generating set for $I$ as ``$\preceq $'' is a well-order on $\BQ$. As $\tip (I)$ generates an admissible ideal of $ k Q $, it holds that there is a unique (finite) subset $ T \subseteq  \tip (I)$ such that $T $ and $\tip ( I )$ generate the same ideal of $ k Q $, and no proper subpath of a path in $ T $ is in that ideal. It turns out that the \emph{reduced Gr{\"o}bner basis of $I$} is the finite set $\GG_{\textrm{red}} = \{ t - N( t )   \}_{ t \in T } $; see \cite[Subsection~2.3.1]{Green2}{}. We also note that $ \tip ( \GG_{\textrm{red}} )$ is contained in $ \tip ( \GG ) $ for every Gr\"{o}bner basis $\GG $ for $ I $.

We continue by recalling and reformulating the algorithm in \cite[Subsection 2.4.1]{Green2}{} for the construction of a Gr{\"o}bner basis for $I$ given a finite set of generating relations, and the Division Algorithm in \cite[Subsection 2.3.2]{Green2}{}. The former algorithm, which we call the \emph{Gr{\"o}bner Basis Algorithm} from now on, is based on the latter and the notion of overlapping relations from \cite[Subsection 2.3.3]{Green2}{}.

Given a finite set $X = \{ x_1 , x_2 , \ldots , x_n \} \subseteq k Q $ and an element $y \in k Q$, we may divide $y$ by $X$ to get a \emph{remainder} $r \in k Q$, which we denote by $ y \Rightarrow _X r$. In order for $r$ to be uniquely defined we need to consider $X$ as an ordered set and, whenever a path $u $ divides a path $v $ in more than one places, we always consider, for instance, the leftmost division. Then the Division Algorithm may be reformulated as follows.

\begin{reform}[Division Algorithm]
	\label{reformI}
	For $X$ and $y$ as above, we construct recursively two sequences of elements in $k Q$, $\{ y_j \}_{j \geq 0}$ and $\{ r_j \}_{j \geq 0}$, according to the following rules.
	\begin{enumerate}
		\item[Rule I:] \phantomsection	\hypertarget{ruleI}{}	We set $y_0 : = y$ and $r_0 := 0$.
		\item[Rule II:] \phantomsection	\hypertarget{ruleII}{}	If $y_j \neq 0$ and $\tip (y_j)$ is divided by $\tip ( x_i )$ for some $i \in \{ 1, 2, \ldots, \k \}$,	
		take $i$ to be the minimal such non-negative integer and let $\tip (y_j ) = c \tip (x_i) d $ be the leftmost division of $\tip (y_j)$ by $\tip ( x_i )$ for paths $c$, $d$.
		Then
		\[
		y_{j+1} := y_j - ( \ctip (y_j ) \ctip (x_i)^{-1} ) \cdot c x_i d     \,\,\, \textrm{ and }  \,\,\,      r_{j+1} := r_j.
		\]
		\item[Rule III:] \phantomsection	\hypertarget{ruleIII}{}	 If $y_j \neq 0$ and Rule II does not apply, then
		\[
		y_{j+1 } := y_j - \ctip (y_{j } ) \cdot \tip (y_j )     \,\,\, \textrm{ and }  \,\,\,    r_{j+1 } := r_j + \ctip (y_{j } ) \cdot \tip (y_j ).
		\]
		\item[Rule IV:] If $y_j = 0$, then $r = r_j$ is the remainder of the division of $y$ by $X$. The algorithm stops here.
	\end{enumerate}
	Note that the algorithm does stop after a finite number of steps, since $\tip (y _{j+1 } ) \prec \tip ( y_j )$ for every integer $j \geq 0$ such that the elements $y_j$ and $y_{j+1}$ are non-zero, and ``$\preceq $'' is a well-order.
\end{reform}

Assume now that $z, w \in k Q$ and there are paths $u$, $v $ such that $\tip(z ) v = u \tip (w )$ and $\tip (z ) $, $\tip (w )$ do not divide $u $ and $v$, respectively. In other words, we assume that $\tip (z ) $ overlaps with $\tip (w )$ from the right. Then the \emph{overlapping relation of $(z, w)$ by $(u, v)$} is
\[
\oo (z, w, u, v) : = \ctip ( z ) ^{-1} \cdot z v - \ctip ( w ) ^{-1} \cdot u w \in k Q.
\]

Let $X \subseteq k Q $ be any finite set of relations. Then we may construct a finite superset of relations $\tilde{ X }$ of $ X $ as follows. First, we fix an arbitrary linear order on the elements of $ X $. Then we define an element $y \in k Q $ to be in $\tilde{ X }$ if $ y \in X $, or there are $z, w  \in X$ and an overlapping relation $\oo ( z, w, u, v )$ such that $ \oo ( z, w, u, v ) \Rightarrow_X y $. Note that $\tilde{ X }$ is finite as there are only finitely many overlapping relations for any pair of elements, if any. It is also worth noting that, even though $\tilde{ X }$ is in general larger than $ X $, it is still contained in the ideal generated by $ X $.

The Gr{\"o}bner Basis Algorithm can now be reformulated as follows.

\begin{reform}[Gr{\"o}bner Basis Algorithm]
	\label{reformII}
	Let $ T $ be a finite generating set of relations for $I$. Let $ T_0 : = T $ be equipped with any linear order and set $ T_{j+1} : = \tilde{ T_j } $ for every $j \geq 0$, where the elements of $ T_{j+1} $ are ordered linearly as follows. The elements of $ T_j $ retain the inherited linear order, the elements in $ T_{j+1 } \setminus T_j $ are ordered linearly in an arbitrary way, and every element of $ T_{j+1 } \setminus T_j $ is bigger than any element of $ T_j $. Then \cite[Proposition 2.8]{Green2}{} and \cite[Proposition 2.10]{Green2} ensure that the sequence $\{ T_j \}_{j \geq 0}$ is eventually constant. Moreover, the finite set (of relations) at which this sequence stabilizes is a Gr{\"o}bner basis for $I$.
\end{reform}

	\begin{exam}
	\label{exam:1'}
	Let $ \La = k Q / I $ be the bound quiver algebra of \cref{exam:1}{}. We compute the Gr\"{o}bner basis for $ I $ consructed from the generating set $ T $ according to the Gr\"{o}bner Basis Algorithm, with respect to any admissible order on $ \BQ $.

	We begin by fixing any admissible order ``$\preceq_1 $'' on the set $ \BQ $ of paths in $Q$ satisfying $ \g_1 \d_1 \prec_1 \g_2 \d_2 $. Then $\tip ( T ) $ contains all the paths occurring in $ T $ except for the path $\g_1 \d_1 $. According to \cref{reformII}{}, the first step we have to take is to compute $\tilde{ T }$. Note that an overlapping relation between two paths is always zero and, thus, it suffices to only consider the overlapping relations of $\g_2 \d_2 - \g_1 \d_1 $ with the elements of $ T $. The only such non-zero relation is
	\[
	\oo(\l_3 \g_2  \, , \,  \g_2 \d_2 - \g_1 \d_1  \, , \,  \l_3  \, , \,  \d_2 ) = \l_3 \g_2 \cdot \d_2 - \l_3 \cdot (\g_2 \d_2 - \g_1 \d_1) = \l_3 \g_1 \d_1 .
	\]
	But $\l_3 \g_1 \d_1 $ is divided by $\l_3 \g_1 \in T $, implying that $ \l_3 \g_1 \d_1 \Rightarrow_{ T } 0 $. In particular, we have that $\tilde{ T } = T $ is a Gr{\"o}bner basis for $ I $. We note that $ T $ happens to be the reduced Gr\"{o}bner basis of $ I $ in this case, as $\g_1 \d_1 \in \ntip(I_1)$.

	If we fix now any admissible order ``$\preceq_2 $'' on $ \BQ $ such that $ \g_2 \d_2 \prec_2 \g_1 \d_1 $, then the Gr{\"o}bner basis of $ I $ constructed from $ T $ is $\tilde{ T } = T \cup \{ \th \a \b \g_2 \d_2  \}$. Moreover, the reduced Gr\"{o}bner basis for $ I $ is $ \tilde{ T } \setminus \{ \th \a \b \g_1 \d_1 \} $ in this case.
\end{exam}

The existence of a Gr{\"o}bner basis for $I$ of the following special type will be the most general context in which our main theorem holds. As in \cref{subsec:mon.ar.rem}{}, we use $p$ and $q$ to denote two paths that avoid the fixed arrow $ \a $, such that $t( p ) = s ( \a )$  and $s( q) = t ( \a )$. Furthermore, we allow at most one of $p$ and $q$ to be trivial.

\begin{defn}
	\label{defn:2}
	A Gr{\"o}bner basis $\GG $ for $I$ will be called \emph{strict $ \a $-monomial} if there are paths $p$ and $q$ as above such that:
		\begin{enumerate}[\rm(i)]
			\item the set $ \GG $ satisfies property \hyperlink{P1}{(P1)} non-trivially, i.e.\ it contains the path $p \a q $;
			\item the set $\tip ( \GG )$ satisfies properties \hyperlink{P2}{(P2)}, \hyperlink{P2op}{(P2)$^{\textrm{op}}$} and \hyperlink{P3}{(P3)}.
		\end{enumerate}
	We write $\GG = \GG_{p , q} $ in order to indicate the paths $p$ and $q$.
\end{defn}

\begin{rem}
	\label{rem:1}
	If the admissible ideal $I$ possesses a strict $ \a $-monomial Gr{\"o}bner basis $ \GG = \GG_{p , q} $, then $ \a $ cannot be a loop. Indeed, if $ \a $ was a loop then there would be a positive integer $ i $ such that $ \a ^i \in I$, implying that $\tip ( g )$ divides $ \a ^i$ for some $ g \in \GG $. However, this is not possible as either $\tip ( g ) = p \a q$ or $\tip ( g ) $ avoids $ \a $.
\end{rem}

\begin{lem}
	\label{lem:00}
	The ideal $I$ possesses a strict $ \a $-monomial Gr{\"o}bner basis if and only if the reduced Gr{\"o}bner basis of $I$ is strict $ \a $-monomial. In particular, if $ \GG_{ p , q } $ and $ \GG'_{p' , q'} $ are two strict $ \a $-monomial Gr\"{o}bner bases for $ I $, then $ p ' = p $ and $ q' = q $.
\end{lem}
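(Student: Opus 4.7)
My plan is to dispatch the easy direction first and then handle the harder direction together with the uniqueness claim. The ``if'' direction is immediate: the reduced Gr\"{o}bner basis $\GG_{\textrm{red}}$, when strict $\a$-monomial, itself witnesses the existence of a strict $\a$-monomial Gr\"{o}bner basis for $I$. The remaining work is therefore to start from an arbitrary strict $\a$-monomial Gr\"{o}bner basis $\GG = \GG_{p,q}$ for $I$ and show that $\GG_{\textrm{red}}$ is strict $\a$-monomial for the same pair $(p,q)$; the uniqueness of $(p, q)$ will then follow at once.

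For the three ``tip'' conditions of \cref{defn:2}{} I would lean on the inclusion $\tip(\GG_{\textrm{red}}) \subseteq \tip(\GG)$ recalled in the excerpt. Since each of properties \hyperlink{P2}{(P2)}, \hyperlink{P2op}{(P2)$^{\textrm{op}}$} and \hyperlink{P3}{(P3)} is preserved under passage to subsets, this delivers them for $\tip(\GG_{\textrm{red}})$ with no additional combinatorial work.

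The main step, and the only one I expect to require genuine effort, is to verify that $p\a q \in \GG_{\textrm{red}}$ so that property \hyperlink{P1}{(P1)} is satisfied non-trivially. Using the description $\GG_{\textrm{red}} = \{t - N(t)\}_{t \in T}$, where $T$ is the unique minimal set of paths generating the ideal $\la \tip(I)\ra$ of $k Q$, it suffices to show $p\a q \in T$: the containment $p\a q \in I$ would then force $N(p\a q) = 0$, leaving $p\a q = p\a q - N(p\a q) \in \GG_{\textrm{red}}$. Membership $p\a q \in \tip(I)$ is automatic since $p\a q \in \GG \subseteq I$ is already a path, so the only nontrivial point is the minimality assertion that no proper subpath of $p\a q$ lies in $\la\tip(I)\ra$. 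Assuming such a subpath $s$ existed, it would be divisible by some $t \in \tip(\GG)$, and the length bound $|t| \leq |s| < |p\a q|$ rules out $t = p\a q$; property \hyperlink{P1}{(P1)} would then force $t$ to avoid $\a$, so $t$ would be a subpath of $p$ or of $q$, directly contradicting property \hyperlink{P3}{(P3)} for $\tip(\GG)$.

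For the uniqueness claim, given two strict $\a$-monomial Gr\"{o}bner bases $\GG_{p,q}$ and $\GG'_{p',q'}$ for $I$, the previous step places both paths $p\a q$ and $p'\a q'$ in $\GG_{\textrm{red}}$, while property \hyperlink{P1}{(P1)} applied to $\GG_{\textrm{red}}$ says at most one of its elements has a tip divisible by $\a$. Hence $p\a q = p'\a q'$, and since $p$, $p'$, $q$ and $q'$ all avoid $\a$, unique factorization of paths in $Q$ forces $p = p'$ and $q = q'$.
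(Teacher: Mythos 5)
There is a genuine gap. To conclude that $\GG_{\textrm{red}}$ is strict $\a$-monomial, you must verify that $\GG_{\textrm{red}}$ satisfies property (P1) non-trivially, which by the paper's own convention means \emph{both} $p\a q \in \GG_{\textrm{red}}$ \emph{and} $\GG_{\textrm{red}} \setminus \{p\a q\}$ avoids $\a$. Your argument (membership of $p\a q$ in the minimal set $T$, hence in $\GG_{\textrm{red}}$) delivers the first half, but you never address the second. The elements of $\GG_{\textrm{red}}$ have the form $t - N(t)$ with $t \in T$, and although the tip $t$ avoids $\a$ when $t \neq p\a q$, there is no a priori reason the normal form $N(t)$ does: $\ntip(I)$ certainly contains paths divisible by $\a$ (indeed $\a$ itself), so $N(t)$ could in principle involve them, and then $t - N(t)$ would not avoid $\a$.

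The paper closes this precisely with a decomposition argument: writing each $z \in kQ$ uniquely as $z = z_\a + z'$ with $z_\a$ a combination of paths through $\a$ and $z'$ avoiding $\a$, it invokes the proof of \cref{propcleftextension}{} to get $z_\a, z' \in I$ whenever $z \in I$, applies this to $z = t - N(t) \in I$, and uses $I \cap {}_k\la\ntip(I)\ra = 0$ to force $N(t)_\a = 0$. Without some argument of this kind, your proof is incomplete even though the rest of what you wrote (the subset argument for (P2), (P2)$^{\textrm{op}}$, (P3); the minimality argument showing $p\a q \in T$; the uniqueness deduction) matches the paper's route.
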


\begin{proof}
	Let $\GG = \GG_{p , q}$ be a strict $ \a $-monomial Gr{\"o}bner basis for $I$. Then $\GG$ is in particular an $ \a $-monomial generating set for $I$. For every element $ z \in k Q $, we use $ z_\a $ and $ z'$ to denote the unique elements of $k Q$ such that $z' $ avoids $ \a $, every path occurring in $z_\a $ is divided by $ \a $,  and $z = z_\a + z'$. Then it follows from the proof of \cref{propcleftextension}{} that $z_\a, z' \in I$ whenever $z \in I$.
	
	Let now $\GG_{\textrm{red}} $ be the reduced Gr{\"o}bner basis of $I$, and observe that $\tip (\GG_{\textrm{red}} )$ satisfies properties \hyperlink{P2}{(P2)}, \hyperlink{P2op}{(P2)$^{\textrm{op}}$} and \hyperlink{P3}{(P3)}, as $\tip (\GG_{\textrm{red}} ) \subseteq \tip (\GG)$. It remains to show that $p \a q \in \GG_{\textrm{red}} $ and $\GG_{\textrm{red}} \setminus \{ p \a q \} $ avoids $ \a $.
	
	Let $T$ denote the unique minimal generating set of the ideal of $ k Q $ generated by $ \tip (I) $, and observe that $ T \subseteq \tip ( \GG ) $. We begin by showing that $p \a q \in T$. If we assume that $p \a q \notin T$, then there is an element $ g \in \GG \setminus \{ p \a q \}$ such that $\tip ( g ) $ divides $ p \a q $. Since $ g $ avoids $ \a $, the path $\tip ( g ) $ divides $p$ or $q$, a contradiction as $\tip (\GG)$ satisfies property \hyperlink{P3}{(P3)}. Of course, we have $N( p \a q ) = 0$ as $p \a q \in I$. It remains to prove that for every path $t \in T \setminus \{ p \a q \}$ it holds that $t - N(t)$ avoids $ \a $. It is immediate that $t $ avoids $ \a $ as $ t \in \tip ( \GG  \setminus \{ p \a q \} )$. On the other hand, we have that $ { { ( t - N( t ) )' } = { t - N(t)' } } , { { ( t - N( t ) )_\a } = { N(t)_\a} } \in I$ since $t - N ( t ) \in I$. We conclude that $N(t)_\a = 0$, as the intersection of $I$ and ${}_k \la \ntip (I) \ra$ is trivial, and thus the element $N(t) = N(t)'$ avoids $ \a $.
	
	Now observe that, if $\GG' = \GG'_{p' , q'}$ is some other strict $ \a $-monomial Gr{\"o}bner basis for $I$, then $p' = p $ and $q' = q $ as both $p \a q $ and $p' \a q'$ are equal to the unique path of $T$ that is divided by $ \a $. It remains to note that $T$ is uniquely determined by $I$ and the admissible order on $\BQ$.
\end{proof}

We prove next that an admissible ideal $I$ admits a strict $ \a $-monomial Gr{\"o}bner basis whenever $\La / \la \ol{ \a } \ra $ is a strict monomial arrow removal of $ \La = k Q / I $. The converse is not true, however, as illustrated in \cref{exam:1''}{}.

\begin{lem}
		\label{lem:1}
	If the quotient algebra $\La / \la \ol{ \a } \ra $ is a strict monomial arrow removal of $\La = k Q / I $, then
	there is a finite Gr{\"o}bner basis for $I$ that is strict $ \a $-monomial (even as a generating set for $I$).
\end{lem}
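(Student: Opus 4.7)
The plan is to apply the Gr\"{o}bner Basis Algorithm (\cref{reformII}{}) directly to the given strict $\alpha$-monomial generating set $T = T_{p,q}$ for $I$, which by \cref{lem:0}{} satisfies properties \hyperlink{P1}{(P)} non-trivially. The algorithm terminates at a finite Gr\"{o}bner basis $\GG$ for $I$, and I aim to show that $\GG$ is in fact strict $\alpha$-monomial, including as a generating set for $I$ (as asserted in the parenthetical sharpening).

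The argument rests on the following support-level invariant, proved by induction on $j\geq 0$ for the sequence $T_0 = T$, $T_{j+1}=\tilde{T_j}$: every path $\sigma$ in the support of every element of $T_j$ either equals $p\alpha q$, or avoids $\alpha$ and satisfies \hyperlink{P2}{(P2)}, \hyperlink{P2op}{(P2)$^{\textrm{op}}$}, \hyperlink{P3}{(P3)} with respect to $p$ and $q$. The base case is the hypothesis. For the inductive step I would first show that the element $p\alpha q$ is \emph{inert}. Any overlap of $\tip(p\alpha q)=p\alpha q$ with a tip $\tip(w)\in\tip(T_j)\setminus\{p\alpha q\}$ requires a non-trivial common middle path $r_2$ that avoids $\alpha$ (since $\tip(w)$ does); such $r_2$ is either a non-trivial suffix of $q$ forming a prefix of $\tip(w)$, contradicting \hyperlink{P2op}{(P2)$^{\textrm{op}}$} on $\tip(w)$, or a non-trivial suffix of $\tip(w)$ forming a prefix of $p$, contradicting \hyperlink{P2}{(P2)}. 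Inclusion overlaps are excluded because $p\alpha q$ contains $\alpha$ while $\tip(w)$ does not, together with \hyperlink{P3}{(P3)} on $\tip(w)$; and the self-overlap of $p\alpha q$ would require a non-trivial prefix of $p$ equal to a non-trivial suffix of $q$, excluded by \hyperlink{P2}{(P2)} applied to the path $p\alpha q\in T$ itself. Finally, because $p\alpha q$ contains $\alpha$, it cannot divide any $\alpha$-avoiding path, so the Division Algorithm applied to $\alpha$-avoiding overlap relations never uses $p\alpha q$.

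It remains to check the invariant for each new element added to $T_{j+1}$, that is, for the remainder of each overlap relation $\oo(z, w, U, V)$ with $z, w \in T_j \setminus \{p\alpha q\}$ after division by $T_j$. A sub-induction on the steps of the Division Algorithm shows that at every intermediate stage each path in the current support can be written as $A\sigma B$, where $\sigma$ lies in the support of some element of $T_j$ and $A$, $B$ are, respectively, a prefix and a suffix of a path produced at some earlier stage --- ultimately a prefix of $\tip(z)$ and a suffix of $\tip(w)$ at stage zero. Divisibility of $A\sigma B$ by $p$ or by $q$ would force $\sigma$ to divide $p$ or $q$, contradicting the invariant for $\sigma$. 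For a putative non-trivial suffix $\rho$ of $A\sigma B$ that is a prefix of $p$, one argues by cases according to $|\rho|$: if $\rho$ lies inside $B$, it is a non-trivial suffix of an earlier path and contradicts the invariant for that path; if $\rho = \sigma_r B$ with $\sigma_r$ a non-trivial suffix of $\sigma$, then $\sigma_r$ itself is a prefix of $p$, contradicting \hyperlink{P2}{(P2)} for $\sigma$; if $\rho = A_r\sigma B$, then $\sigma$ is a subpath of $p$, contradicting \hyperlink{P3}{(P3)} for $\sigma$. The symmetric analysis of non-trivial prefixes of $A\sigma B$ that are suffixes of $q$ handles \hyperlink{P2op}{(P2)$^{\textrm{op}}$}.

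The resulting $\GG$ therefore satisfies the invariant: $p\alpha q\in\GG$, $\GG\setminus\{p\alpha q\}$ avoids $\alpha$, and every path in $\tip(\GG)$ (in fact in the support of every element of $\GG$) satisfies \hyperlink{P2}{(P2)}, \hyperlink{P2op}{(P2)$^{\textrm{op}}$}, \hyperlink{P3}{(P3)}; for $p\alpha q$ itself these are the hypotheses on $T$. Hence $\GG$ is a strict $\alpha$-monomial Gr\"{o}bner basis in the sense of \cref{defn:2}{}, and since the invariant is stated at the support level it also verifies \cref{defn:1}{}, giving the parenthetical sharpening. The main obstacle is the careful bookkeeping in the case analysis for the Division Algorithm: one must track, for each path appearing at an intermediate stage, a specific decomposition $A\sigma B$ together with the provenance of $A$ and $B$ as prefixes and suffixes of previously produced paths, so that the support-level inductive hypothesis can be legitimately invoked in every case.
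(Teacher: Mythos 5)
Your strategy is essentially the one the paper uses: apply the Gr\"{o}bner Basis Algorithm to $T = T_{p,q}$ and show inductively that properties \hyperlink{P1}{(P)} are preserved through overlap formation and the Division Algorithm, with $p\alpha q$ playing an inert role (your ``inert'' argument corresponds to the paper's Step~I, your support analysis of an overlap relation to Step~II, and your Division Algorithm analysis to Step~III). The genuine difference is in the invariant carried through the Division Algorithm. You track, for every path in every intermediate support, a provenance decomposition $A\sigma B$ with $\sigma$ a support path of an element of $T_j$ and $A$, $B$ a prefix and suffix of a path produced at some earlier stage. The paper instead maintains the much lighter invariant that the current intermediate element $y_j$ itself satisfies \hyperlink{P1}{(P)} and is not $p\alpha q$, and then analyzes a single Rule~II step $h = g - (\ctip(g)\ctip(f)^{-1})\cdot cfd$ in isolation: if $\{f,g\}$ satisfies \hyperlink{P1}{(P)}, so does $h$. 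A non-trivial suffix $\rho$ of a new path $csd$ that is a prefix of $p$ falls into the same three cases you enumerate ($\rho$ inside $d$; $\rho$ straddling $s$ and $d$; $\rho$ containing $s$), but the contradictions are drawn directly from \hyperlink{P2}{(P2)} and \hyperlink{P3}{(P3)} applied to $\tip(g)=c\tip(f)d$ and to $s$, without needing to know where $c$ and $d$ originally came from. This sidesteps the bookkeeping you flag as the main obstacle; your proof is correct, but the lighter invariant would have spared you that work.
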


\begin{proof}
	Let $ T = T_{p , q}$ be a finite strict $ \a $-monomial generating set of relations for $I$. According to \cref{lem:0}{}, this is equivalent to $ p \a q \in T $ and $ T $ satisfying properties \hyperlink{P1}{(P)} for the paths $p$ and $q$. It suffices to prove that if $X$ is any finite set of relations satisfying properties \hyperlink{P1}{(P)}, then the same holds for $\tilde{ X }$. Indeed, then the Gr\"{o}bner basis $ \GG $ constructed from $ T $ as in \cref{reformII}{} will be strict $ \a $-monomial as a generating set for $ I $, as it will satisfy properties \hyperlink{P1}{(P)} and $ p \a q \in T \subseteq \GG $.

	Let $ y = \oo ( z, w, u, v )$ be an overlapping relation for $z, w \in X$, and let $y \Rightarrow_X r $. The only thing we need to prove is that the remainder $r $ satisfies properties \hyperlink{P1}{(P)}.
	
	\smallskip
	
	\phantomsection	\hypertarget{stepI}{}	\textbf{Step I}: It holds that $z $ and $ w $ avoid $ \a $.
	
	We assume that $p \a q \in X$ because, otherwise, there is nothing to prove. Since the set $ X $ satisfies in particular property \hyperlink{P1}{(P1)}, we have to show that $ z, w \neq p \a q $.
	
	Let $z \neq p \a q $ and $w = p \a q $, and the case where $z = p \a q $ and $w \neq p \a  q$ is analogous. It holds that $\tip (z ) v = u p \a q $, where $ p \a q $ does not divide $v$ and $\tip (z ) $ does not divide $u$. On the other hand, the path $\tip (z) $ avoids $ \a $ implying that $\tip ( z) p_2 = u p $, where $p_2 $ is a proper subpath of $p $ such that $p = p_1 p_2$ and $v = p_2 \a q $. It follows that $p$ overlaps with $\tip (z )$ from the left as the paths $\tip (z)$ and $p$ do not divide $u$ and $p_2$, respectively, and this is a contradiction to property \hyperlink{P2}{(P2)} for the set $X$.
	
	Now let $z= p \a q = w $. Then $p \a q v = u p \a q $ where $ p \a q $ does not divide $u$ or $v$. We may assume that none of $u$ and $v$ is trivial because, otherwise, we would have $y = 0$ and there would be nothing to prove. It follows that the path $ v $ is divided by $ \a $.
	Consequently, we have $v = p_2 \a q $ for a proper subpath $p_2 $ of $p$ such that $p = p_1 p_2 $. In particular, we have $ ( p \a q ) p_2 = u p $ where $p $ does not divide $ p_2 $ and $ p \a q $ does not divide $ u $, implying that $ p $ overlaps from the left with $ p \a q $, a contradiction to property \hyperlink{P2}{(P2)} for the set $ X $.
	
	\smallskip
	
	\phantomsection	\hypertarget{stepII}{}	\textbf{Step II:} It holds that $y  $ satisfies properties (P) and $y \neq p a q$.
	
	Recall that
	\[
	y = \ctip ( z )^{-1} \cdot z v - \ctip ( w )^{-1} \cdot u w .
	\]
	Therefore, every path occurring in $y$ is of the form $ s v $ or $ u t $ for paths $ s $ and $ t $ occurring in $z$ and $w$, respectively. More generally, the paths $ s $ and $ t $ occur in $X$. It suffices to prove that the paths of the form $ s v $ as above satisfy properties \hyperlink{P1}{(P)} and avoid $ \a $, and the proof for paths of the form $ u t $ is analogous. We assume that $ v $ is non-trivial as, otherwise, there is nothing to prove.
	
	\hyperlink{P1}{(P1)} The equality $\tip (z) v = u \tip (w )  $, where the paths $\tip (w )$ and $\tip (z) $ do not divide $v$ and $u$, respectively, implies that the paths $u$ and $v$ avoid $ \a $. Indeed, this follows from property \hyperlink{P1}{(P1)} for $X$ and the fact that $z$ and $w$ avoid $ \a $ (\hyperlink{stepI}{Step I}{}). In particular, the paths of the form $ s v $  avoid $ \a $.
	
	\hyperlink{P2}{(P2)} We assume now that $p$ overlaps with a path $ s v $ from the left. Then either $p$ overlaps with $ v $ from the left, implying that $p$ overlaps with $\tip (w )$ from the left, or $p$ overlaps with $ s $ from the left. In both cases, we have reachead a contradiction as both $ \tip (w) $ and $ s $ occur in $X $.
	
	\hyperlink{P2op}{(P2)$^{\textrm{op}}$} If $q$ overlaps with $ s v $ from the right, then either $q$ overlaps with $ s $ from the right or $ s $ divides $q$. This is a contradiction in either case, as $X$ satisfies properties \hyperlink{P2op}{(P2)$^{\textrm{op}}$} and \hyperlink{P3}{(P3)}.

	\hyperlink{P3}{(P3)} Assume that a path of the form $ s v $ divides $p $ or $q$. Then $ s  $ divides $p $ or $ q$, a contradiction as $X$ satisfies property \hyperlink{P3}{(P3)}.
	
	\smallskip
	
	\phantomsection	\hypertarget{stepIII}{}	\textbf{Step III:} Consider elements $f , g \in k Q$ such that $g \neq p a q$, the set $\{ f, g\}$ satisfies properties \hyperlink{P1}{(P)}, and $\tip ( g ) = c \tip ( f ) d $ for paths $c$ and $d$. Then the element
	\[
	h = g - ( \ctip (g ) \ctip ( f )^{-1} ) \cdot c f d
	\]
	satisfies properties \hyperlink{P1}{(P)} and $h \neq p a q $.

	\hyperlink{P1}{(P1)} Since $ g \neq p \a q  $ and the set $ \{ f , g  \} $ satisfies property \hyperlink{P1}{(P1)}, we have that $ g $ avoids $ \a $. Now the equation $\tip ( g ) = c \tip ( f ) d $ implies that the pahts $ \tip ( f ) $, $c$ and $d$ avoid $ \a $ as the same holds for $\tip ( g )$. In particular, we have that $ f \neq p \a q $ and therefore $ f $ avoids $ \a $ as the set $ \{  f , g \} $ satisfies property \hyperlink{P1}{(P1)}.
	
	\hyperlink{P2}{(P2)} If $p$ overlaps with $ h $ from the left, then $p$ overlaps with $c s d $ from the left for some path $ s $ occurring in $ f $. There are three cases. If $p$ overlaps with $ d $ from the left, then $p$ overlaps with $ \tip ( g ) $ from the left. If $p$ does not overlap with $d$ from the left but it does so with $ s d $, then $p$ overlaps with $ s $ from the left. If $p$ does not overlap with $ d $ or $ s d $ from the left then it does so with $c s d $, in which case $s$ divides $ p $. All three cases contradict the assumption that the set $ \{ f ,  g \} $ satisfies properties \hyperlink{P2}{(P2)} and \hyperlink{P3}{(P3)}.
	We omit the proof of property \hyperlink{P2op}{(P2)$^{\textrm{op}}$} as it is analogous to the proof of property \hyperlink{P2}{(P2)}.
	
	\hyperlink{P3}{(P3)} If a path occurring in $ h $ divides $ p $ or $ q $, then it has to be of the form $c s d $ for some path $ s $ occurring in $ f $. But then $s $ divides $p$ or $q$, a contradiction to the fact that the set $ \{ f ,  g \} $ satisfies property \hyperlink{P3}{(P3)}.
	
	\smallskip
	
	A careful examination of \cref{reformI}{} shows that $r$ satisfies properties \hyperlink{P1}{(P)} and is not equal to $ p \a q $ according to \hyperlink{stepII}{Step II}{} and \hyperlink{stepIII}{Step III}{}. Specifically, we have that the element $y_0 = y $ satisfies properties \hyperlink{P1}{(P)} and is not equal to $ p \a q $ according to \hyperlink{stepII}{Step II}{}. Assume now that the elements $y_j$ and $y_{j+1}$ are non-zero for some non-zero integer $ j $, where $y_j $ satisfies properties \hyperlink{P1}{(P)} and is not equal to $ p \a q $. If $y_j$ satisfies \hyperlink{ruleII}{Rule II}, then \hyperlink{stepIII}{Step III}{} implies that $ y_{j+1} $ satisfies properties \hyperlink{P1}{(P)} and is not equal to $ p \a q $. If $y_j$ satisfies \hyperlink{ruleIII}{Rule III}, then we reach the same conclusion trivially. The lemma follows now from the fact that every path occurring in $r$ occurs in $y_j$ for some non-negative integer $ j $.
\end{proof}

	\begin{exam}
		\label{exam:1''}
	Let $ \La = k Q / I $ be the bound quiver algebra of \cref{exam:1}{}. We show that $ I $ possesses a strict $ \a $-monomial Gr\"{o}bner basis with respect to any admissible order on $ \BQ $, even though the quotient algebra $ \La / \la \ol{ \a } \ra $ is not a strict monomial arrow removal of $ \La $ according to \cref{exam:1'}{}.
	
	Recall that $ T = T_{p, q}$ is the reduced Gr\"{o}bner basis for $ I $ with respect to any admissible order on $ \BQ $ such that $ \g_1 \d_1 $ is smaller than $ \g_2 \d_2 $. It is easy to see that $ T = T_{p, q}$ is strict $\a$-monomial as a Gr{\"o}bner basis by verifying property \hyperlink{P1}{(P1)} for $ T $ and properties \hyperlink{P2}{(P2)}, \hyperlink{P2op}{(P2)$^{\textrm{op}}$} and \hyperlink{P3}{(P3)} for $ \tip( T ) $. It should be noted that $ T $ is not strict $ \a $-monomial as a generating set for $ I $.

	On the other hand, if the fixed admissible order on $ \BQ $ is such that $ \g_2 \d_2 $ is smaller than $ \g_1 \d_1 $, then the Gr\"{o}bner basis $ \tilde{ T } = T \cup \{ \th \a \b \g_2 \d_2  \}$ constructed from $ T $ is not strict $ \a $-monomial. However, the reduced Gr\"{o}bner basis $\tilde{ T } \setminus \{ \th \a \b \g_1 \d_1 \} $ is strict $\a $-monomial, which is straightforward to verify.
\end{exam}

We close this section by noting that a loop can never be removed through a strict monomial arrow removal.

\begin{cor}
	If $ \a $ is a loop, the quotient algebra $\La / \la \ol{ \a } \ra $ is never a strict monomial arrow removal of $\La = k Q / I$.
\end{cor}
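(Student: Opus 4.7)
The plan is to argue by contradiction, reducing the statement to the combination of Lemma \ref{lem:1} and Remark \ref{rem:1}. Suppose, toward a contradiction, that $\a$ is a loop at a vertex $v$ and that $\La/\la \ol{\a}\ra$ is a strict monomial arrow removal of $\La = kQ/I$. Then Lemma \ref{lem:1} produces a strict $\a$-monomial Gr\"{o}bner basis $\GG = \GG_{p,q}$ for $I$. But Remark \ref{rem:1} already asserts that no admissible ideal possessing such a Gr\"{o}bner basis can have $\a$ as a loop; this is immediately the required contradiction. So the proof is essentially a one-line invocation of these two results.

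For completeness I would also unpack the mechanism of Remark \ref{rem:1} in a single sentence, since the reader has just seen it stated but not proved in detail. Since $I$ is admissible and $\a$ is a loop, there is a positive integer $n$ with $\a^n \in I$. By the Gr\"{o}bner basis property, some $g \in \GG$ must satisfy $\tip(g) \mid \a^n$, so $\tip(g)$ is a pure power of $\a$. Under condition (i) of Definition \ref{defn:2}, either $\tip(g) = p\a q$ or $\tip(g)$ avoids $\a$. In the latter case, $\tip(g)$ cannot divide $\a^n$ (it would have to be trivial, which is ruled out by admissibility). In the former case, $p\a q$ being a power of $\a$ forces $p$ and $q$ to be powers of $\a$ as well; since $p$ and $q$ avoid $\a$ by hypothesis, both must be the trivial path $e_v$, contradicting condition (c) of Definition \ref{defn:1}(ii) which requires at most one of $p$ and $q$ to be trivial.

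The main obstacle is not in this corollary at all — the work has already been carried out in Lemma \ref{lem:1}, where properties \hyperlink{P1}{(P)} had to be propagated through overlap formation and the Division Algorithm. Once that lemma is in hand, the present corollary is genuinely immediate, and I would therefore keep the written proof extremely short, essentially just pointing to Lemma \ref{lem:1} and Remark \ref{rem:1}.
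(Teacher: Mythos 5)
Your proof is correct and follows exactly the same route as the paper: the paper's own proof of this corollary is the one-line statement ``Direct application of \cref{lem:1}{} and \cref{rem:1}{}.'' Your unpacking of the mechanism inside \cref{rem:1} is accurate and consistent with the argument sketched there, but it is extra elaboration rather than a new idea.
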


\begin{proof}
	Direct application of \cref{lem:1}{} and \cref{rem:1}{}.
\end{proof}

\section{Cleft extensions and the finitistic dimension}
\label{sectioncleftextabcat}

In this section, we establish conditions on an abelian category cleft extension that allow us to relate the finitistic dimensions of the involved categories.
Abelian category cleft extensions were introduced by Beligiannis \cite{Beligiannis}, and they
generalize abelian category trivial extensions due to
Fossum-Griffith-Reiten \cite{FosGriRei}.  We start by recalling basic results about them from \cite{Beligiannis,GPS}.

\begin{defn}\textnormal{(\!\!\cite[Definition~2.1]{Beligiannis})}
	\label{defncleftext}
	A \emph{cleft extension} of an abelian category $\B$ is an
	abelian category $\A$ together with functors:
	\[
	\xymatrix@C=0.5cm{
		\B \ar[rrr]^{\mi} &&& \A \ar[rrr]^{\me} &&& \B
		\ar @/_1.5pc/[lll]_{\ml} } 
	\]
	henceforth denoted by $ \C = (\B,\A, \me, \ml, \mi)$, such that the
	following conditions hold:
	\begin{enumerate}[\rm(a)]
		\item The functor $\me$ is faithful exact.
		
		\item The pair of functors $(\ml,\me)$ is adjoint.

		\item There is a natural isomorphism of functors
		$\varphi\colon \me \mi \isomto \mathsf{Id}_{\B}$.
	\end{enumerate} 
\end{defn}

For the rest of the section, we fix an abelian category cleft extension $ \C = (\B,\A, \me, \ml, \mi)$ as above.
We collect next some basic properties for the functors of $ \C $, most of which follow directly from the definition; see also \cite{Beligiannis}
and \cite[Lemma 2.2]{GPS}.

\begin{lem}
	\label{basicproperties}
	The following hold for the cleft extension $ \C = (\B,\A, \me, \ml, \mi)$.
	\begin{enumerate}[\rm(i)]
		\item The functor $\me\colon \A\lxr \B$ is essentially surjective. 
		
		\item The functor $\mi\colon \B \lxr \A$ is fully faithful and exact.
		
		\item The functor $\ml\colon \B\lxr \A$ is faithful and preserves
		projective objects.
		
		\item There is a functor $\mq\colon \A\lxr \B$ such that $(\mq,\mi)$
		is an adjoint pair.
		
		\item There is a natural isomorphism of
		functors $\mq\ml\simeq \mathsf{Id}_{\B}$.
		\item  The functor $\mq\colon \A \to \B$ preserves projective
		objects.
	\end{enumerate}
\end{lem}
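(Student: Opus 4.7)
My plan is to handle the six items in order, since each uses the previous ones together with the three pieces of structure given in the definition of a cleft extension: faithful-exactness of $\me$, the adjunction $(\ml,\me)$ with unit $\eta$ and counit $\epsilon$, and the isomorphism $\varphi \colon \me\mi \xrightarrow{\sim} \id_\B$.

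Parts (i) and (ii) are essentially direct consequences of $\varphi$. For (i), any $B \in \B$ is isomorphic to $\me\mi B$. For (ii), the composition
\[
\Hom_\B(B, B') \xrightarrow{\mi} \Hom_\A(\mi B, \mi B') \xrightarrow{\me} \Hom_\B(\me\mi B, \me\mi B') \cong \Hom_\B(B, B')
\]
is the identity via $\varphi$; together with $\me$ being faithful on morphisms, this forces both arrows to be bijections, so $\mi$ is fully faithful. Exactness of $\mi$ then follows because $\me$ is faithful exact (hence reflects exact sequences) and $\me\mi \cong \id_\B$ preserves them.

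The first non-formal point is the faithfulness of $\ml$ in (iii). I would show that the unit $\eta_B \colon B \to \me\ml B$ is a split monomorphism for every $B$: specialize the triangle identity $\me\epsilon \circ \eta\me = \id_\me$ at $\mi B \in \A$ and transport along $\varphi$ to produce a retraction $\me\ml B \to B$ of $\eta_B$. Then $\ml$ is faithful because, under the $(\ml,\me)$-adjunction, $\ml(f)$ corresponds to $\eta_{B'} \circ f$, which determines $f$ once $\eta_{B'}$ is monic. Preservation of projectives by $\ml$ is the standard argument: $\Hom_\A(\ml P, -) \cong \Hom_\B(P, \me(-))$ is exact since $\me$ is exact and $P$ is projective.

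The main obstacle is (iv), the construction of $\mq$. My plan is to build the left adjoint of $\mi$ directly from the data of the cleft extension, using $\mi$'s full faithfulness and exactness together with the counit $\epsilon$ to cut out a suitable functorial quotient in $\B$, and then verify the adjunction $\Hom_\B(\mq A, B) \cong \Hom_\A(A, \mi B)$ by combining the $(\ml,\me)$-adjunction with $\varphi$; alternatively one may invoke a categorical existence theorem for left adjoints, given that $\mi$ is exact and our abelian categories have enough structure. With $\mq$ in place, (v) is an immediate Yoneda computation:
\[
\Hom_\B(\mq\ml B, B') \cong \Hom_\A(\ml B, \mi B') \cong \Hom_\B(B, \me\mi B') \cong \Hom_\B(B, B'),
\]
and naturality gives $\mq\ml \cong \id_\B$. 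Finally (vi) mirrors the projectivity claim in (iii): for $P$ projective in $\A$, the composition $\Hom_\B(\mq P, -) \cong \Hom_\A(P, \mi(-))$ is exact since $\mi$ is exact by (ii) and $P$ is projective.
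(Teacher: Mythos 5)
Parts (i)--(iii) are correct and follow the standard lines you describe, and so are (v)--(vi) once (iv) is granted; note the paper itself gives no proof here but cites \cite{Beligiannis} and \cite[Lemma~2.2]{GPS}. The genuine gap is (iv). You rightly flag the existence of the left adjoint $\mq$ of $\mi$ as the main obstacle, but you never actually produce it. Your first route, ``build the left adjoint of $\mi$ directly\ldots to cut out a suitable functorial quotient in $\B$,'' is only a plan: you would have to specify the quotient, show that it lies in the essential image of $\mi$, and verify the adjunction isomorphism, and none of that is routine. Your fallback, ``invoke a categorical existence theorem for left adjoints,'' does not apply at this level of generality: the (special) adjoint functor theorem needs co/completeness and well-poweredness, or a generator and a solution-set condition, none of which is part of the data of an abstract abelian category, and this lemma is stated even before the paper imposes its running assumption of enough projectives. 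So as written, (iv) is asserted rather than proved, and (v), (vi) inherit that gap.

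For orientation, one explicit construction proceeds from the natural epimorphism $\mu\colon \ml \to \mi$ obtained from the counit $\epsilon$ at $\mi(-)$ together with $\varphi$ (this is the map underlying the short exact sequence $0\to H\to \ml \to \mi\to 0$ recalled just after the lemma): for $A\in\A$ one forms the composite $G(A)\hookrightarrow \ml\me(A)\xrightarrow{\mu_{\me(A)}}\mi\me(A)$, shows its cokernel lies in the essential image of $\mi$, and uses full faithfulness of $\mi$ to obtain $\mq(A)$ with the required universal property. Some concrete argument of this kind is what your step (iv) must supply; with that in place, your deductions of (v) and (vi) go through exactly as written.
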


We continue by recalling three fundamental functors associated to $ \C $, and their accompanying short exact sequences. In what follows, we write $ \eta \colon 1_\B\lxr \me\ml$ and $ \varepsilon \colon \ml\me\lxr 1_\A$ to denote the unit and counit, respectively, of the adjoint pair $(\ml,\me)$. Recall that the equality
$ 1_{\me(A)} = \me(\varepsilon_A) \eta_{\me(A)} $
holds for all $A \in \A$ as part of the definition of the adjunction.

The first functor is endofunctor $G\colon \A\lxr \A$ defined through the assignment $A\mapsto \Ker\varepsilon_A$. It comes along with the short exact sequence of functors
\phantomsection	\hypertarget{eq:G}{}
\begin{equation}		
		\label{eq:G}
	0\to G \xrightarrow{ \phantom{  \varepsilon  } } \ml\me \xrightarrow{ \, \varepsilon \, } \iden_{\A}\to 0 .
\end{equation}
To see that the morphism $\varepsilon_A \colon \ml \me ( A ) \to A $ is an epimorphism for every $ A \in \A $, note that morphism $\me(\varepsilon_A)$ is a split epimorphism (with section $ \eta_{\me(A)} $), and our claim follows from the fact that $\me$ is
faithful. 

The second functor is $H \colon \B \to \A $ defined through the assignment $B\mapsto G \mi(B) $, and the third one is the endofunctor $F \colon \B\lxr \B$ given by $ F = \me H $. Functor $ H $ comes along with the short exact sequence
\phantomsection	\hypertarget{eq:H}{}
\begin{equation}
		\label{eq:H}
	0 \to H \xrightarrow{ \phantom{ \, \varepsilon } }  \ml \xrightarrow{ \, \varepsilon \mi \, } \mi \to 0
\end{equation}
obtained through the application of the short exact sequence \hyperlink{eq:G}{(\ref{eq:G}{})}{} to the object $ \mi (B)$, taking into consideration the natural isomorphism $\varphi\colon \me \mi \isomto \mathsf{Id}_{\B}$. Similarly, functor $ F $ comes along with the short exact sequence
\phantomsection	\hypertarget{eq:F}{}
\begin{equation}
		\label{eq:F}
	0 \to F \xrightarrow{ \phantom{  \varepsilon \mi } } \me \ml \xrightarrow{ \, \me \varepsilon \mi \, } \iden_{\B}\to 0
\end{equation}
which is obtained from the short exact sequence \hyperlink{eq:H}{(\ref{eq:H}{})}{} by applying the exact functor $ \me $ and the identification $\varphi\colon \me \mi \isomto \mathsf{Id}_{\B}$.
Furthermore, the exact sequence \hyperlink{eq:F}{(\ref{eq:F}{})}{} splits as $ \eta $ is a section of $ \me \varepsilon \mi $; see also \cite[Lemma~2.3]{GPS}.

From now on, all abelian categories are assumed to have enough projective objects. In particular, the notions of projective dimension for an object and of finitistic dimension for the whole category are well-defined.

We establish next conditions on functors $\me$, $\mi$,
$H$ and $G$ associated to the abelian category cleft
extension $ \C $, so that $ \fpd \B < \infty $ implies $ \fpd \A < \infty $. Toward this end, we need the following quantities.

\begin{defn}
	For an abelian category cleft extension $ \C = (\B,\A, \me, \ml, \mi)$, we define the following quantities.
		\begin{enumerate}[\rm(i)]
			\item $ p_\A = \sup\{ \pd_\B \me(P)\mid P\in\Proj(\A)\} $; 
			\item $ p_\B = \sup\{ \pd_\A \mi(P')\mid P'\in\Proj(\B)\} $; 
			\item $ n_H = \sup\{ \pd_\A H(B) \mid B\in\B\} $;
			\item $ n_G = \sup\{ \pd_\A G(A) \mid A\in\A\} $. 
		\end{enumerate}
\end{defn}

\begin{lem}		\hypertarget{thm:findimcleftextab}{}
		\label{thm:findimcleftextab}
	For an abelian
	category cleft extension $ \C = (\B,\A, \me, \ml, \mi)$, the following inequalities hold for every $ A \in \A $ and $ B \in \B $.
	\begin{enumerate}[\rm(i)]
		\item	$ \pd_\B \me(A) \leq \pd_\A A + p_\A $; 
		\item	$ \pd_\A \mi (B ) \leq \pd_\B B + p_\B $;
		\item	$ \pd_\A \ml(B) \leq \max\{ n_H, \pd_\B B + p_\B\} $.
	\end{enumerate}
\end{lem}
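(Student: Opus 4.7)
The three statements are cleanly connected: (i) and (ii) are exact-functor arguments using projective resolutions, and (iii) reduces to (ii) via the short exact sequence \hyperlink{eq:H}{(\ref{eq:H})}.

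For (i), my plan is to take a projective resolution $\cdots\to P_1\to P_0\to A\to 0$ in $\A$, truncated to length $n:=\pd_\A A$, giving an exact sequence
\[
0\to P_n\to P_{n-1}\to\cdots\to P_0\to A\to 0.
\]
Applying the exact functor $\me$ yields an exact sequence of the same length in $\B$ whose terms satisfy $\pd_\B \me(P_i)\le p_\A$ by definition of $p_\A$. A standard induction on $n$ (chopping off the rightmost short exact sequence $0\to K\to \me(P_0)\to \me(A)\to 0$ where $K$ is the image of $\me(P_1)$ and applying the inductive bound $\pd_\B K\le (n-1)+p_\A$ together with $\pd_\B \me(A)\le\max\{\pd_\B \me(P_0),\pd_\B K+1\}$) then gives $\pd_\B \me(A)\le n+p_\A$, as required.

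For (ii), the argument is formally identical, but this time using that $\mi$ is exact (\cref{basicproperties}(ii)): take a projective resolution of $B$ in $\B$ of length $m:=\pd_\B B$, apply $\mi$, and use the bound $\pd_\A \mi(P')\le p_\B$ for projective $P'\in\B$, plus the same inductive truncation trick.

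For (iii), I would apply the short exact sequence \hyperlink{eq:H}{(\ref{eq:H})} to $B$, namely
\[
0\to H(B)\to \ml(B)\to \mi(B)\to 0,
\]
and combine this with the standard homological inequality $\pd Y\le\max\{\pd X,\pd Z\}$ for a short exact sequence $0\to X\to Y\to Z\to 0$. This yields
\[
\pd_\A \ml(B)\le \max\{\pd_\A H(B),\,\pd_\A\mi(B)\}\le \max\{n_H,\,\pd_\B B+p_\B\},
\]
where in the last step we invoke part (ii) for the second term and the definition of $n_H$ for the first.

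The only point of care is the classical truncation bound used in (i) and (ii): given an exact sequence $0\to M_n\to\cdots\to M_0\to M\to 0$ with $\pd M_i\le d$ for all $i$, one has $\pd M\le n+d$. This is routine by induction on $n$ via the short exact sequence $0\to \Image(M_1\to M_0)\to M_0\to M\to 0$, and I expect no genuine difficulty—the whole lemma is really a packaging of the exactness of $\me$ and $\mi$, the behaviour of $\ml$ on projectives (\cref{basicproperties}(iii)), and the sequence \hyperlink{eq:H}{(\ref{eq:H})}.
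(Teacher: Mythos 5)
Your proposal is correct and follows the same route as the paper: apply the exact functor $\me$ (resp.\ $\mi$) to a finite projective resolution and invoke the standard truncation/dimension-shifting bound for parts (i) and (ii), then feed the short exact sequence $0\to H(B)\to\ml(B)\to\mi(B)\to 0$ together with $\pd Y\le\max\{\pd X,\pd Z\}$ and part (ii) into part (iii). The paper compresses the induction into the phrase ``a well-known homological argument,'' which you have simply unpacked.
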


\begin{proof}
	(i) If one of $ \pd_\A A $ and $ p_\A $ is infinite, then there is nothing to prove. Therefore, we assume that both are finite. In particular, there is a projective resolution of $ A $ of finite length equal to $ \pd_\A A $. Applying the exact functor $ \me $ to this resolution yields an exact sequence in $ \B $ of length $ \pd_\A A $, where all object except possibly for $ \me ( A )$ have projective dimension bounded above by $ p _\A $. The desired inequality follows now from a well-known homological argument.

	(ii) Similar to the proof of part (i).
	
	(iii) We assume that $ p_\B$, $ n _H $ and $ \pd_\B B $ are finite as, otherwise, there is nothing to prove. From sequence \hyperlink{eq:H}{(\ref{eq:H}{})}{} applied to $ B $ and a well-known homological argument, we have that
		\[
			\pd_\A \ml(B) \leq \max\{ \pd_\A H(B), \pd_\A \mi(B) \}
		\]
	and it remains to apply the inequality of part (ii).
\end{proof}

We prove next the main theorem of this section.

\begin{thm}	
		\label{thm:1}
	Let $ \C = (\B,\A, \me, \ml, \mi)$ be an abelian
	category cleft extension such that all the quantities $ p_\A $, $ p_\B $, $ n_H $ and $ n_G $ are finite. Then
		\[
			\fpd \B < \infty \, \implies \, \fpd \A < \infty .
		\]
\end{thm}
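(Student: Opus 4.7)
The plan is to show that for any $A \in \A$ with $\pd_\A A < \infty$, the projective dimension of $A$ is bounded by a constant depending only on $\fpd \B$, $p_\A$, $p_\B$, $n_H$ and $n_G$. The main handle on $A$ is the short exact sequence in (\ref{eq:G}) evaluated at $A$, namely
\[
0 \to G(A) \to \ml\me(A) \to A \to 0,
\]
which expresses $A$ as a quotient of $\ml\me(A)$ modulo $G(A)$. Both of the outer terms are objects whose projective dimensions we can control: $G(A)$ by the constant $n_G$, and $\ml\me(A)$ by reducing it to a statement about $\me(A) \in \B$, which lives in the category where the finitistic dimension is assumed to be finite.

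First, I would fix $A \in \A$ with $\pd_\A A < \infty$ and observe, via \cref{thm:findimcleftextab}(i), that $\pd_\B \me(A) \leq \pd_\A A + p_\A < \infty$. Since $\fpd \B < \infty$, this immediately gives the uniform bound $\pd_\B \me(A) \leq \fpd \B$. Applying \cref{thm:findimcleftextab}(iii) to $B = \me(A)$ then yields
\[
\pd_\A \ml\me(A) \leq \max\{ n_H,\, \pd_\B \me(A) + p_\B \} \leq \max\{ n_H,\, \fpd \B + p_\B \},
\]
a bound independent of $A$.

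Next, I would combine this with the short exact sequence above. By the standard homological estimate $\pd_\A A \leq \max\{\pd_\A \ml\me(A),\, \pd_\A G(A) + 1\}$ and the definition of $n_G$, this gives
\[
\pd_\A A \leq \max\{ n_H,\, \fpd \B + p_\B,\, n_G + 1 \}.
\]
As the right-hand side depends only on the cleft extension and $\fpd\B$, we conclude
\[
\fpd \A \leq \max\{ n_H,\, \fpd \B + p_\B,\, n_G + 1 \} < \infty,
\]
which is the desired implication.

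There is no real obstacle here: all the pieces are already assembled in \cref{thm:findimcleftextab} and in the exact sequence (\ref{eq:G}). The only subtle point is to observe, at the very beginning, that finiteness of $\pd_\A A$ forces $\pd_\B \me(A)$ to be finite (via $p_\A < \infty$), so that the hypothesis $\fpd \B < \infty$ can be invoked to turn that finiteness into the uniform bound $\fpd \B$.
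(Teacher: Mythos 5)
Your proposal is correct and follows essentially the same route as the paper's proof: apply Lemma~\ref{thm:findimcleftextab}(i) to see $\pd_\B \me(A)$ is finite and hence bounded by $\fpd\B$, apply part~(iii) to bound $\pd_\A \ml\me(A)$, and conclude via the short exact sequence~\eqref{eq:G}. The only cosmetic difference is the form of the homological estimate attached to~\eqref{eq:G}: you use the sharper $\pd_\A A \leq \max\{\pd_\A \ml\me(A),\, \pd_\A G(A)+1\}$ while the paper uses the coarser $\pd_\A A \leq \max\{\pd_\A G(A),\, \pd_\A \ml\me(A)\}+1$, which affects the explicit bound by at most~$1$ but not the finiteness conclusion.
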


\begin{proof}
	Let $ A $ be an object of $ \A $ such that $ \pd_\A A < \infty $. From sequence \hyperlink{eq:G}{(\ref{eq:G}{})}{} applied to $ A $ and a well-known homological argument, we have that
		\[
			\pd_\A A \leq \max\{\pd_\A G(A), \pd_\A \ml\me(A)\} + 1 \leq \max\{ n_G , n_H , \pd_\B \me(A) + p_\B\} + 1
		\]
	where the second inequality follows from \hyperlink{thm:findimcleftextab}{\cref{thm:findimcleftextab}{}.(iii)}{}. It remains to note that $ \pd_\B \me(A) $ is bounded above by $ \fpd \B $ as it is finite due to \hyperlink{thm:findimcleftextab}{\cref{thm:findimcleftextab}{}.(i)}{}.
\end{proof}

\begin{rem}
	It holds that $ p_\B \leq n_H + 1 $.
	Indeed, sequence \hyperlink{eq:H}{(\ref{eq:H}{})}{} applied to any object $ P' \in \Proj (\B ) $ yields that $\pd_\A \mi(P') \leq \pd_\A H( P' ) + 1 \leq n_H + 1$, as functor $ \ml $ preserves projectivity according to \cref{basicproperties}{}. In particular, we obtain the (simplified) upper bound
 		\[
 			\fpd \A \leq \max \{ n_G , \fpd B + n_H + 1 \} + 1
 		\]
 	from the proof of \cref{thm:1}{}.
\end{rem}

	We close this section by describing functors $ G $, $ H $ and $ F $ for the special instance of ring cleft extensions. Recall that a \emph{ring cleft extension} is a quadruple $(\La, \Ga, \nu , \pi )$ where $ \nu \colon \Ga \monicc \La$ and $\pi \colon \La \epic \Ga$ are two ring homomorphisms such that $\pi \nu = \id_\Ga$, following the terminology of Beligiannis \cite{Beligiannis}{}.
	In particular, we have that $\nu$ is injective and $\pi$ is surjective, and we may identify $\Gamma$ with
	$\Lambda/ K $ where $ K = { \Ker \pi } $. For concrete examples of ring cleft extensions in addition to the ones in \cref{sectionexamples}{}, we refer the reader to \cite{GPS,EPS,Giata}.
	
	In the situation of a ring cleft extension as above, the diagram
	\begin{equation*}
		\label{digramwithendofunctors}
		\xymatrix@C=0.5cm{
			\sMod \Ga \ar[rrr]^{\mi = { ( - )_\La } } &&& \sMod \La \ar[rrr]^{\me = { ( - )_\Ga } } \ar @/_1.5pc/[lll]_{ \mq = { - \otimes_\La \Ga _\Ga } } &&& \sMod \Ga
			\ar @/_1.5pc/[lll]_{\ml = { - \otimes_\Ga \La _\La } } } 
	\end{equation*}
	is an abelian category cleft extension, where $ \me \mi = \iden_{ \sMod \Ga } $. Moreover, if $ \Ga $ and $ \La $ are finite-dimensional algebras then all the functors of the above diagram restrict to functors between the categories of finitely generated modules over $ \Ga $ and $ \La $, and the resulting diagram is again an abelian category cleft extension.
	
	It can be directly computed that functors $ F $ and $ H $ are naturally isomorphic to the tensor functors induced by the ideal $ K = \Ker \pi $ viewed as a $ \Ga $-bimodule and as a $ \Ga $-$ \La $-bimodule, respectively. We only recall here that the standard counit of the adjoint pair $ ( \ml , \me ) $ in the context of ring cleft extensions is $ \varepsilon_Y \colon Y \otimes_\Ga \La \epic Y $ given by $ y \otimes a \mapsto y a $ for every $ y \in Y $ and $ a \in \La $, for every $ \La $-module $ Y $.
	Furthermore, the $ \Ga $-actions on $ K $ are induced by the restriction of scalars $ \me = { ( - )_\Ga } $. For the rest of the paper, we make the identifications $ F = { - \otimes_\Ga K_\Ga } $ and $ H = { - \otimes_\Ga K_\La } $.
	
	The calculation of functor $ G $ as a tensor product requires a more detailed treatment. In what follows, we write $ L $ to denote the kernel of $ \varepsilon_{\La} \colon \La \otimes_\Ga \La \epic \La $, that is $ L = G ( \La_\La ) $ is the kernel of the map $ \La \otimes_\Ga \La \epic \La $ induced by multiplication in $ \La $.

	\begin{lem}
		For a ring cleft extension $(\La, \Ga, \nu , \pi )$, functor $ G $ is naturally isomorphic to the tensor functor induced by the kernel of the multiplication map $ \La \otimes_\Ga \La \epic \La $.
	\end{lem}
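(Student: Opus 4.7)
The plan is to lift the defining short exact sequence for $L$ to the bimodule setting, tensor with an arbitrary right $\Lambda$-module from the left, and then match the resulting sequence with the defining sequence for $G$.

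First, I would observe that the multiplication map $\mu\colon \La\otimes_\Ga\La\epic\La$ is a morphism of $\La$-bimodules (where $\La\otimes_\Ga\La$ carries its canonical $(\La,\La)$-bimodule structure from the outer factors). Hence $L = \ker\mu$ is a $(\La,\La)$-subbimodule and we get a short exact sequence
\[
0\to L\to \La\otimes_\Ga\La\xrightarrow{\mu}\La\to 0
\]
of $\La$-bimodules. Now fix a right $\La$-module $A$ and apply the functor $A\otimes_\La(-)$, viewing the sequence as one of left $\La$-modules. Since ${}_\La\La$ is free (hence flat), we have $\Tor_1^\La(A,\La)=0$, so the resulting sequence
\[
0\to A\otimes_\La L\to A\otimes_\La(\La\otimes_\Ga\La)\to A\otimes_\La\La\to 0
\]
is short exact.

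Next, I would use the standard natural isomorphisms $A\otimes_\La\La\iso A$ and $A\otimes_\La(\La\otimes_\Ga\La)\iso (A\otimes_\La\La)\otimes_\Ga\La\iso A\otimes_\Ga\La$, and then check by chasing pure tensors that, under these identifications, the map induced by $\mu$ is exactly the counit $\varepsilon_A\colon A\otimes_\Ga\La\to A$, $a\otimes\lambda\mapsto a\lambda$. This identifies $G(A)=\ker\varepsilon_A$ with $A\otimes_\La L$, giving the desired isomorphism on objects.

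Finally, I would verify naturality: all the isomorphisms used (unitors and associator of the tensor products) are natural in $A$, and the identification of the induced map with $\varepsilon_A$ is natural because $\varepsilon$ is a natural transformation. Since taking kernels of a natural transformation between functors yields a natural assignment, the isomorphism $G(A)\cong A\otimes_\La L$ is natural in $A$, establishing $G\cong -\otimes_\La L$ as functors $\sMod\La\to\sMod\La$. No serious obstacle is expected; the only care needed is in the bookkeeping of left/right actions so that the Tor-vanishing and the associativity isomorphism are applied on the correct side.
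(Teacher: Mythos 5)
Your proposal is correct and follows essentially the same route as the paper: both proofs lift the multiplication map to a short exact sequence of $\La$-bimodules, tensor with an arbitrary right $\La$-module, invoke $\Tor_1^\La(-,\La)=0$ for exactness, and identify the induced map with the counit $\varepsilon$ via the unitor/associator isomorphisms. The only cosmetic difference is that you spell out the naturality check, which the paper leaves implicit.
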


	\begin{proof}
	Observe that $ \varepsilon_{ \La } $ is a homomorphism of $ \La $-bimodules, and therefore $ L $ has a natural $ \La $-bimodule structure inherited by the natural $ \La $-bimodule structure of $ \La \otimes_\Ga \La $.
	Tensoring the short exact sequence $ 0 \to L \to { \La \otimes_\Ga \La } \xrightarrow{ \, \varepsilon_\La \, } \La \to 0 $ of $ \La $-bimodules with any module $ Y _\La $ from the left, we get the sequence of right $ \La $-modules
	\[
		0 \to { Y \otimes_\La L } \to { Y \otimes_\La ( \La \otimes_\Ga \La ) } \to { Y \otimes_\La \La } \to 0
	\]	
	which is exact since $ Y \otimes_\La - $ is right exact as a tensor functor and $\Tor_1^\La ( Y , \La  ) = 0 $. Note that $\e_Y$ is isomorphic to $1_Y \otimes \e_\La $ in the sense of the following commutative diagram
	\[\begin{tikzcd}
		{ Y \otimes_\La ( \La \otimes_\Ga \La  ) } & { Y \otimes_\La \La } \\
		{ Y \otimes_\Ga \La  } & { Y }
		\arrow["{1_Y \otimes \e_\La }", from=1-1, to=1-2]
		\arrow["\e_Y", shorten <=5pt, from=2-1, to=2-2]
		\arrow["\simeq"', from=1-1, to=2-1]
		\arrow["\simeq", from=1-2, to=2-2]
	\end{tikzcd}\]
	where both vertical maps are induced by the natural isomorphism $- \otimes_\La \La \cong \iden_{ \sMod \La } $. We conclude that $G \cong { - \otimes_\La L_\La } $ as functors.
	\end{proof}
	
	For the rest of the paper, we use the identification $ G = { - \otimes_\La L_\La } $.
	
	\section{Proof of Main Theorem}
	\label{sectionproofofmainthm}

	The aim of this section is to apply \cref{thm:findimcleftextab}{} to a bound quiver algebra $ \La = k Q / I $ where the admissible ideal $ I $ possesses a strict monomial Gr{\"o}bner basis for a fixed arrow $ \a $. This is possible as the algebras $ \La $ and $ \La / \la \ol{ \a } \ra $ form a cleft extension in this case, see \cref{propcleftextension}{}. Therefore, we only have to verify conditions (i) to (iv) of the aforementioned theorem.

	In what follows, the set $ \BQ $ of paths in $Q$ is equipped with a fixed admissible order. We use $ \NN = \ntip (I)$ to denote the set of paths that do not occur as the tip of an element in $ I $ with respect to this order. Recall that $ \NN + I $ is a $ k $-basis of $ \La $, and the set $ \NN $ enjoys the special property of being closed under subpaths.
	
	For any vertex $ i \in Q_0 $ we write $ e_i $ to denote the trivial path corresponding to this vertex. For any pair of vertices $ i , j \in Q_0 $, we write $ e_i \NN e_j $ to denote the subset of $ \NN $ consisting of the paths with source $ i $ and target $ j $. Note that $ e_i \NN e_j + I $ is a $ k $-basis of $ e_i \La e_j $. Similarly, we write $ e_i \NN $ (resp.\ $ \NN e_i $) to denote the paths in $ \NN $ that start (resp.\ end) at vertex $ i $, and the set $ e_i \NN + I $ (resp.\ $ \NN e_i + I $) is a $ k $-basis of $ e_i \La $ (resp.\ $ \La e_i $). More generally, for paths $ r_1 , r_2 \in \BQ $ we write $ r_1 \NN r_2 $ to denote the set of paths of the form $ r_1 u r_2 $ where $ u \in \NN $, and $ t( r_1 ) = s ( u ) $ and $ s ( r_2 ) = t ( u ) $; we use $ r_1 \NN $ and $ \NN r_2$ in a similar fashion. Observe that the set $ r_1 \NN r_2 + I $ generates $ r_1 \La r_2 $ as a $ k $-vector space, but is not $ k $-linearly independent in general.
	
	Lastly, if $ \b \in Q_1 $ is any arrow, then $ s(\b) $ and $ t(\b) $ denote the source and target of $ \b $, respectively, and $ \sss ( \b ) $ and $ \ttt ( \b ) $ denote the respective trivial paths in both $ k Q $ and $ \La = k Q / I $ without risk of confusion.

	For the rest of the section, we write $ \La = k Q / I $ to denote a bound quiver algebra, where the admissible ideal $ I $ possesses a strict $ \a $-monomial Gr{\"o}bner basis $ \GG = \GG_{p, q}$ for a fixed arrow $ \a $. Furthermore, we write $ \Ga $ for the quotient algebra $ \La / \la \ol{ \a } \ra $, and $\Ga $ will often be identified with the bound quiver algebra $ k Q^* / I^* $ introduced in \cref{propcleftextension}{}.
	
	We begin with a useful preliminary result.

	\begin{lem}
		\label{lem:preliminary}
		Multiplication by $ p $ from the right induces isomorphisms $ \La { \sss ( p ) } \isomto \La \ol{ p } $ and $ \Ga { \sss ( p ) } \isomto \Ga \ol{ p } $. Similarly, multiplication by $ q $ from the left induces isomorphisms $ { \ttt ( q ) } \La \isomto \ol{ q } \La $ and $ { \ttt ( q ) } \Ga \isomto \ol{ q } \Ga $. 
	\end{lem}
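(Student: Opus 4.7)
The plan is to reduce both isomorphism statements to a single combinatorial claim about the nontip set $\NN$, and then to handle the $\Ga$-version by noting that $\GG \setminus \{ p \a q \}$ is a Gr\"{o}bner basis for $ I^* = k Q^* \cap I $. If $ p $ is trivial then the first pair of isomorphisms is tautological, so I assume $ p $ is nontrivial; the arguments for $ q $ are entirely symmetric.

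First, the map $\La \sss(p) \lxr \La \ol{p}$ given by right multiplication by $\ol{p}$ is plainly a homomorphism of left $\La$-modules, and is surjective: any element of $\La \ol{p}$ can be written as $\ol{y}\ol{p}$ with $y = \sum c_u u$ in normal form (each $u \in \NN$), and dropping from $y$ the terms with $t(u) \neq s(p)$, which contribute zero, yields a preimage in $\La \sss(p)$. For injectivity the heart of the argument is the combinatorial claim:
\[
u \in \NN \text{ with } t(u) = s(p) \ \implies \ u p \in \NN .
\]
To prove it, suppose some $g \in \tip(\GG)$ divides $up$. Viewing $up$ as the juxtaposition of the arrows of $u$ followed by those of $p$, the subpath $g$ either lies entirely inside $u$, lies entirely inside $p$, or straddles the boundary, i.e.\ $g = r_1 r_2$ with $r_1$ a nontrivial suffix of $u$ and $r_2$ a nontrivial prefix of $p$. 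The first case contradicts $u \in \NN$, the second contradicts property \hyperlink{P3}{(P3)} for $\tip(\GG)$, and the third is exactly the statement that $p$ overlaps with $g$ from the left, contradicting property \hyperlink{P2}{(P2)}.

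Granting the claim, injectivity is immediate: if $y \in k Q \, \sss(p)$ is in normal form and $\ol{y}\ol{p} = 0$ in $\La$, then $yp \in I$ is itself in normal form (by the claim) and hence vanishes in $k Q$; the distinctness of the paths $up$ as $u$ varies over $\NN \cdot e_{s(p)}$ forces $y = 0$. For the $\Ga$-version, one first verifies that $\GG^* := \GG \setminus \{ p \a q \}$ is a Gr\"{o}bner basis for $I^*$: it lies in $k Q^* \cap I = I^*$, and any element of $\tip(I^*)$ is divisible by some $\tip(g)$ with $g \in \GG$, where necessarily $g \neq p \a q$ since $\tip(g)$ avoids $\a$. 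Consequently $\Ga$-normal forms are supported on $\NN$-paths avoiding $\a$, and since $u$ and $p$ both avoid $\a$ so does $up$; the same normal-form argument then transports to $\Ga$. The statements for $q$ follow by the mirror argument using properties \hyperlink{P2op}{(P2)$^{\textrm{op}}$} and \hyperlink{P3}{(P3)}. The crux of the whole proof, and the only place the strictness of $\GG$ is used, is the straddling case of the claim—precisely where property \hyperlink{P2}{(P2)} (respectively \hyperlink{P2op}{(P2)$^{\textrm{op}}$}) is needed.
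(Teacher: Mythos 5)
Your proof is correct and follows the same strategy as the paper's for the $\La$-part. The combinatorial heart — that $u\in\NN$ with $t(u)=s(p)$ forces $up\in\NN$, proved by the three-way case split on where $\tip(g)$ could sit inside $up$, using $u\in\NN$, property \hyperlink{P3}{(P3)}, and property \hyperlink{P2}{(P2)} — is exactly the paper's argument, just packaged as an explicit lemma rather than as a contradiction from a nonzero kernel element. For the $\Ga$-version, however, the routes diverge slightly: you invoke that $\GG^*=\GG\setminus\{p\a q\}$ is a Gr\"obner basis for $I^*$ and then appeal to the normal-form theory over $kQ^*$, whereas the paper explicitly declines to prove this ("it can be proven...\ will not be used") and instead derives the $k$-basis $\NN_\Ga+I^*$ of $\Ga$ directly from the decomposition $I=I_\a\oplus I^*$ established in Proposition \ref{propcleftextension}, after which the $\La$-level inclusion $\NN p\subseteq\NN$ transfers at once. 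The paper's route has the small advantage of never needing to verify that the order on $Q^*$-paths inherited from $\BQ$ is admissible — a step your argument implicitly relies on (via the general theory that $\ntip(I^*)+I^*$ is a basis) but never addresses. This is easy to patch, since the restriction of an admissible order to paths in a subquiver obtained by deleting arrows is again admissible, but as written it is an unstated dependency. Either way, both approaches terminate in the same normal-form computation, so the substance is the same.
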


	\begin{proof}
		We show first that the map $\La { \sss ( p ) } \xrightarrow{ \, - \cdot p \, } \La \ol{ p }$ is injective, as it is obviously a surjective $ \La $-homomorphism. Recall that the set $ \NN { \sss ( p ) } + I $ is a $ k $-basis of $ \La { \sss ( p ) } $ and therefore, for every $ l \in \La { \sss ( p ) } $, there are coefficients $ \mu_s \in k $ for $ s $ ranging over $ \NN { \sss ( p ) } $  such that $ l = \sum_s \mu_s \cdot s + I $. Let $ l $ be in the kernel of the above homomorphism, that is
			\[
				z = \sum_s \mu_s \cdot s p \in I.
			\]
		If we assume that $ l $ is non-zero, then $ z $ is non-zero as not all coefficients $ \mu_s $ are zero and the paths $ s p $ are different from each other for different values of $ s $. In particular, we have that $ \tip ( z ) = s p $ for some fixed path $ s \in \NN { \sss ( p ) } $. Consequently, there is an element $ g \in \GG $ such that $ \tip ( g ) $ divides $ s p $. But $ \tip( g ) $ cannot divide $ s $ as the latter is in $ \NN $, and $ \tip ( g ) $ cannot divide $ p $ as property \hyperlink{P3}{(P3)} holds for $ \tip ( \GG ) $ by assumption. Therefore, we have $\tip ( g ) = s_2 p_1 $ where $ s_2 $ and $p_1$ are non-trivial subpaths of $ s $ and $ p $, respectively, such that $ s = s_1 s_2 $ and $ p = p_1 p_2 $. This implies in particular that $ p $ overlaps with $ \tip ( g ) $ from the left, a contradiction to property \hyperlink{P2}{(P2)} for $ \tip ( \GG ) $.
		
		Having established the result for $\La$, we now turn our attention to the corresponding isomorphisms over $\Ga$. For this, we establish first a $ k $-basis for $ \Ga $ related to the distinguished basis $ \NN + I $ for $ \La $.
		
		We begin by noting that $\GG$ is a generating set for $I$ satisfying property \hyperlink{P1}{(P1)}. Therefore, the quotient algebra $\Ga $ may be identified with the bound quiver algebra $k Q^* / I^*$ where $Q^* = Q \setminus \{ \a \}$ and $ I^* = k Q^* \cap I $ according to \cref{propcleftextension}{}. It follows from the proof of the same proposition that $\GG \setminus \{ p \a q \}$ is a generating set for $I^*$. Moreover, it can be proven that the total order on the set of paths in $Q^*$ inherited by the fixed order on $ \BQ $ is admissible, and the set $ \GG^* = \GG \setminus \{ p \a q \} $ is a Gr{\"o}bner basis for $I^*$ with respect to this order. The proof of the last two claims is omitted as they will not be used.
		
		We show next that $\NN_\Ga + I^*$ is a $k$-basis of $\Ga$, where $\NN_\Ga = \NN \cap k Q^* $. A key fact toward this goal is the decomposition
		\[
		I = I_\a \oplus I^*
		\]
		where $I_\a $ denotes the subspace of $ I $ consisting of $ k $-linear combinations of paths divided by the arrow $ \a $; see the proof of \cref{propcleftextension}{} for more details. To prove our claim, let $r $ denote a path in $Q^*$ with source $i$ and target $j$. Then regarding $ r $ as a path in $ Q $ yields
		\[
		r -  \sum_{ s \in e_i \NN e_j } \mu_s \cdot s \in  I
		\]
		for unique coefficients $\mu_s \in k $. Applying the above decomposition of $ I $, we get that
			\[
				z = r - \sum_{s \in e_i \NN_\Ga e_j } \mu_s \cdot s  \in I^*
			\]
		where now $ s $ ranges over $ e_i \NN_\Ga e_j $, as $r$ avoids $ \a $. In particular, we have that $r + I^* $ is generated by $\NN_\Ga + I^* $.
		To see that $ \NN_\Ga + I^* $ is also $ k $-linearly independent, assume that $z = \sum_{ s \in \NN_\Ga } \mu_s \cdot s \in I^* $ for coefficients $\mu_s \in k $. Viewing $ z $ as an element of $ k Q $, we have that $z \in I$ implying that $\mu_s = 0 $ for all $s \in \NN_\Ga $ since the set $ \NN_\Ga + I \subseteq \NN + I $ is $ k $-linearly independent.
		
		Turning now our attention to the map $ \Ga { \sss ( p ) } \xrightarrow{ \, - \cdot p \, } \Ga \ol{ p } $, we want to prove its injectivity as it is clearly a surjective $ \Ga $-homomorphism. First, we note that the set $ \NN_\Ga { \sss ( p ) } + I^*$ is a $ k $-basis of $  \Ga { \sss ( p ) } $ as an immediate consequence of the fact that $ \NN_\Ga + I^* $ is a $ k $-basis of $ \Ga $. It remains to observe that the inclusion $  \NN_\Ga p \subseteq \NN_\Ga { \ttt ( p ) } $ holds as a direct consequence of the inclusion $ \NN p \subseteq \NN { \ttt ( p ) } $, which has already been proven indirectly.
		Note that we use $ \ol{ p } $ to denote both $ p + I $ and $ p + I^* $.

		The proof for the statements about $ q $ is analogous and therefore omitted.
	\end{proof}

	Three out of the four conditions in \cref{thm:findimcleftextab}{} follow essentially from the next result, where we establish a projective resolution of the ideal $ \la \ol{ \a } \ra \subseteq \La $ as a $ \La $-$ \Ga $- and $ \Ga $-$ \La $-bimodule. Both $\Ga$-actions come from the restriction of scalars induced by the section algebra monomorphism $ \Ga \monicc \La $ established in \cref{propcleftextension}{}.

	\begin{lem}
			\label{lem:2}
		There are minimal projective resolutions
			\[
				0 \to  { \La { \sss ( p ) } } \otimes_k { { \ttt ( q ) } \Ga }  \to { \La { \sss ( \a ) } } \otimes_k { { \ttt ( \a ) } \Ga } \to \la \ol{ \a } \ra \to 0 
			\]
		and
			\[
				0 \to  { \Ga { \sss ( p ) } } \otimes_k { { \ttt ( q ) } \La }  \to { \Ga { \sss ( \a ) } } \otimes_k { { \ttt ( \a ) } \La } \to \la \ol{ \a } \ra \to 0 
			\]
		 of the ideal $ \la \ol{ \a } \ra $ as a $ \La $-$ \Ga $- and $ \Ga $-$ \La $-bimodule respectively, where the differentials are given by $ { \sss ( \a )  } \otimes_k {  \ttt ( \a ) } \mapsto \ol{ \a } $ and $ { \sss ( p )  } \otimes_k {  \ttt ( q ) } \mapsto \ol{ p } \otimes \ol{ q } $ in both resolutions. 
	\end{lem}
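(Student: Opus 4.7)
The two resolutions are symmetric, so the plan is to focus on the first (the $\La$-$\Ga$-bimodule case) and derive the second from an analogous argument with the roles of the two sides interchanged. Write $\phi$ for the augmentation $\La\sss(\a) \otimes_k \ttt(\a)\Ga \to \la\ol{\a}\ra$, $x \otimes y \mapsto x \ol{\a} y$, and $\psi$ for the differential $\La\sss(p) \otimes_k \ttt(q)\Ga \to \La\sss(\a) \otimes_k \ttt(\a)\Ga$, $x \otimes y \mapsto x\ol{p} \otimes \ol{q}y$. The first observations I would record are: both terms are projective $\La$-$\Ga$-bimodules, as direct summands of $\La \otimes_k \Ga^{\op}$ at idempotents of the form $\sss(v) \otimes \ttt(w)$; the composition $\phi\psi$ vanishes since $\ol{p}\cdot\ol{\a}\cdot\ol{q} = \ol{p\a q} = 0$ by $p\a q \in \GG \subseteq I$; and $\psi$ is injective because it factors as the isomorphism $\La\sss(p) \otimes_k \ttt(q)\Ga \isomto \La\ol{p} \otimes_k \ol{q}\Ga$ (tensor of the two isomorphisms in \cref{lem:preliminary}) followed by the inclusion $\La\ol{p} \otimes_k \ol{q}\Ga \monicc \La\sss(\a) \otimes_k \ttt(\a)\Ga$, itself injective as a $k$-tensor of two inclusions.

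The crux is exactness in the middle term, and the plan is to work in the distinguished $k$-bases coming from $\NN$ and $\NN_\Ga$. Call a basis vector $(r + I) \otimes (s + I^*)$, with $r \in \NN\sss(\a)$ and $s \in \ttt(\a)\NN_\Ga$, \emph{good} when $r\a s \in \NN$ and \emph{bad} otherwise. Good basis vectors map under $\phi$ to honest basis vectors $r\a s + I$ of $\la\ol{\a}\ra$, and $(r, s) \mapsto r\a s$ is a bijection from the good ones onto the basis of $\la\ol{\a}\ra$ (with inverse ``split at the \emph{last} occurrence of $\a$'', unambiguous because $s$ avoids $\a$); this simultaneously proves $\phi$ surjective and identifies $\Ker\phi$ as the span of the bad basis vectors. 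The key combinatorial step is the claim that $(r, s)$ is bad if and only if $r = r'p$ and $s = qs'$ for unique $r' \in \NN\sss(p)$ and $s' \in \ttt(q)\NN_\Ga$. The easy direction is immediate: $r\a s = r' \cdot p\a q \cdot s'$ together with $p\a q \in \tip(I)$ puts $r\a s$ in $\tip(I)$. For the harder direction, given $\tip(g) \mid r\a s$ with $g \in \GG$, property \hyperlink{P1}{(P1)} on $\tip(\GG)$ rules out $\tip(g)$ avoiding $\a$, because any such tip would be contained in an $\a$-free subpath of $r\a s$, hence in a subpath of $r$ or of $s$, hence in $\NN$, contradicting $\tip(g) \in \tip(I)$. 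So $\tip(g) = p\a q$, and its unique $\a$ must align with an $\a$ in $r\a s$; alignment at an $\a$ lying strictly inside $r$ is excluded, for such an alignment would exhibit $p\a q$ as a subpath of $r \in \NN$, contradicting $p\a q = \tip(p\a q) \in \tip(I)$. Hence the alignment is at the $\a$ between $r$ and $s$, yielding $r = r'p$ and $s = qs'$. The assignment $(r', s') \mapsto (r'p, qs')$ thus bijects $\NN\sss(p) \times \ttt(q)\NN_\Ga$ onto the bad basis vectors, and this coincides with the action of $\psi$ on basis vectors, so $\Image(\psi) = \Ker\phi$.

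Minimality reduces to $\Image(\psi) \subseteq \rad(\La\sss(\a) \otimes_k \ttt(\a)\Ga)$, which is immediate from the hypothesis that at most one of $p$ and $q$ is trivial: either $\ol{p} \in \rad(\La)\sss(\a)$ or $\ol{q} \in \ttt(\a)\rad(\Ga)$, so $\ol{p} \otimes \ol{q}$, and hence the entire image, sits in the radical of the middle term. The $\Ga$-$\La$-bimodule resolution will follow the identical template using the bases $\NN_\Ga\sss(\a) \times \ttt(\a)\NN$ on the middle term and splitting $\NN$-paths at their \emph{first} occurrence of $\a$. The hard part throughout is expected to be the combinatorial identification of bad basis vectors: property \hyperlink{P1}{(P1)} is what discards the case $\a \notin \tip(g)$, while properties \hyperlink{P2}{(P2)}, \hyperlink{P2op}{(P2)$^{\op}$} and \hyperlink{P3}{(P3)} are precisely what power \cref{lem:preliminary} and, through it, the containments $\NN\sss(p) \cdot p \subseteq \NN\sss(\a)$ and $q \cdot \ttt(q)\NN_\Ga \subseteq \ttt(\a)\NN_\Ga$ needed for the parametrisation to land in the correct basis.
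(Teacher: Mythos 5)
Your proof is correct and follows essentially the same route as the paper's, with the same Gröbner-basis combinatorics at its heart. The paper organizes the kernel computation around an auxiliary element $w'$ (the part of $f(Z)$ supported on pairs $(u,v)$ where $u$ does \emph{not} end in $p$ or $v$ does \emph{not} start with $q$), argues $w' \in I$, and shows $w' = 0$ via the dichotomy $\tip(g) = p\a q$ versus $\tip(g)$ avoiding $\a$ — precisely your good/bad case analysis. The injectivity of the lower map is handled in the paper by a dimension count built on \cref{lem:preliminary}; your version, factoring $\psi$ through the tensor of the two isomorphisms followed by the inclusion of $k$-subspaces, is the same idea stated more directly. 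You are slightly more explicit about minimality (radical membership via at least one of $p,q$ nontrivial), while the paper packages it into the observation that the top map is a projective cover onto an indecomposable and the second term is itself projective.

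One presentational slip worth flagging: the sentence claiming the good-to-basis bijection ``simultaneously proves $\phi$ surjective and identifies $\Ker\phi$ as the span of the bad basis vectors'' overreaches at that point in the argument. The bijection gives surjectivity, but the kernel identification also requires knowing that bad vectors map to $0$, which only follows once you have proved the combinatorial characterization bad $\Leftrightarrow$ $(r'p, qs')$ (whence $r\a s = r'(p\a q)s' \in I$). You do prove that characterization next, so the proof is sound, but the claim about $\Ker\phi$ should be stated after, not before, the combinatorial step. Also, a small wording point: it is the $\NN$-closure-under-subpaths argument, not \hyperlink{P1}{(P1)}, that rules out $\tip(g)$ avoiding $\a$; \hyperlink{P1}{(P1)} is what then forces $\tip(g) = p\a q$ in the remaining case.
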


	\begin{proof}
		We begin by noting that $ \la \ol{ \a } \ra $ is generated as a $\La$-$\Ga$-bimodule by $ \ol{ \a } $. Indeed, it is generated as a $ k $-vector space by elements of the form $ \ol{ u \a v } $ where $ u $ and $ v $ are paths, and we may assume that $ v $ avoids $ \a $. Therefore the map $ f \colon { \La { \sss ( \a ) } } \otimes_k { { \ttt ( \a ) } \Ga } \to {}_\La \la \ol{ \a } \ra _\Ga $ defined by $ { \sss ( \a )  } \otimes_k {  \ttt ( \a ) } \mapsto \ol{ \a } $ is a surjective homomorphism of $ \La $-$ \Ga $-bimodules, as it sends $ u \otimes v $ to $ \ol{ u \a v } $. This homomorphism is in fact a projective cover as the $\La$-$\Ga$-bimodule $ { \La { \sss ( \a ) } } \otimes_k { { \ttt ( \a ) } \Ga } $ is projective and indecomposable. 
		
		It is obvious that $ \La \ol{ p } \otimes \ol{ q } \Ga $, i.e.\ the submodule of $ { \La { \sss ( \a ) } } \otimes_k { { \ttt ( \a ) } \Ga } $ generated by $ \ol{ p } \otimes \ol{ q } $, is in the kernel of $ f $ as $ p \a q \in I $. Note that we use $\ol{ q } $ to denote both $ q + I $ and its image in $ \Ga = \La / \la \ol{ \a } \ra $ under the canonical epimorphism $\La \epic \Ga $.
		
		For the opposite inclusion, let $ Z \in { \La { \sss ( \a ) } } \otimes_k { { \ttt ( \a ) } \Ga } $ be written as
			\[
				Z = \sum_{u , v}  \mu_{u , v} \cdot \ol{ u } \otimes \ol{ v }
			\]
		for unique coefficients $ \mu_{u , v} \in k $, where $ u $ and $ v $ range over $ \NN { \sss ( \a ) } $ and $ { \ttt ( \a ) } \NN_\Ga  $, respectively. Then $ f ( Z ) = \ol{ w } $ for
			\[	
				w = \sum_{u , v}  \mu_{u , v} \cdot u \a v
			\]
		where $ u $ and $ v $ range over the same subsets of $ \NN $ as above. Assume now that $ Z \in \Ker f $, that is $ w \in I $. For paths $ u , u' \in \NN { \sss ( \a ) } $ and $ v , v' \in { \ttt ( \a ) } \NN_\Ga  $, the equality $ u \a v = u' \a v' $ implies that $ u = u' $ and $ v = v' $ as the paths $ v $ and $ v' $ avoid $ \a $.
		Let $ w ' $ denote the element of $ k Q $ that results from $ w $ if we restrict the above sum to pairs $ u $ and $ v $ such that $ u $ is not divided by $ p $ form the right or $ v $ is not divided by $ q $ from the left. It holds that $ w ' \in I $ as $ p \a q \in I $. If $ w' $ is not zero, there are paths $ u_0 \in \NN { \sss ( \a ) } $ and $ v_0 \in { \ttt ( \a ) } \NN_\Ga $ such that $ u_0 $ is not divided by $ p $ from the right or $ v_0 $ is not divided by $ q $ from the left, and $ u_0 \a v_0 \in \tip ( I ) $. Let $ g $ be the element of $ \GG $ whose tip divides $ u_0 \a v_0 $. If $ g = p \a q $ then $ \tip ( g ) = p \a q $ has to divide $ u_0 $ or $ v_0 $, a contradiction as $ u_0 , v_0 \in \NN $. If $ g \neq p \a q$ then $ \tip ( g ) $ avoids $ \a $, which leads us back to the same contradiction. We deduce that $ w ' = 0 $, implying that $ Z \in \La \ol{ p } \otimes \ol{ q } \Ga $. Hence, we have established that $ \Ker f = \La \ol{ p } \otimes \ol{ q } \Ga $.
		
		It is not difficult to see now that the map $ g \colon \La { \sss ( p ) } \otimes_k { \ttt ( q ) } \Ga \to { \La \ol{ p } \otimes  \ol{ q } \Ga } $ defined by $ { \sss ( p ) } \otimes { \ttt ( q ) } \mapsto \ol{ p } \otimes \ol{ q } $ is a surjective homomorphism of $\La $-$\Ga $-bimodules. To see that $ g $ is bijective, observe that the set $ \NN p \otimes q \NN_\Ga $ is a $ k $-basis of $ { \La \ol{ p } \otimes  \ol{ q } \Ga } $ and, thus, the two bimodules have the same $ k $-dimension due to \cref{lem:preliminary}{}. This proves the validity of the first projective resolution of $ \la \ol{ \a } \ra $ as a $ \La $-$ \Ga $-bimodule.
		
		The proof for the second projective resolution is analogous.
		\end{proof}

		\begin{cor}
				\label{cor:1}
			For any module $ X_\Ga $ there is a projective resolution of the form
				\[
					 0 \to { \ttt ( q ) \La }^{ \dim_k X \ol{ p } } \to { \ttt ( \a ) \La }^{ \dim_k X { \sss ( \a ) } } \to H ( X ) \to 0
				\]
			for $ H ( X ) $ as a right $ \La $-module.
		\end{cor}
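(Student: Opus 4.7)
The plan is to apply the functor $X \otimes_\Ga -$ to the second projective resolution of \cref{lem:2}{}, namely the length-one resolution
\[
0 \to \Ga\sss(p) \otimes_k \ttt(q)\La \to \Ga\sss(\a) \otimes_k \ttt(\a)\La \to \la\ol{\a}\ra \to 0
\]
of $\la\ol{\a}\ra$ viewed as a $\Ga$-$\La$-bimodule. The key preliminary observation is that both $\Ga\sss(p) \otimes_k \ttt(q)\La$ and $\Ga\sss(\a) \otimes_k \ttt(\a)\La$ are flat as left $\Ga$-modules; indeed, choosing a $k$-basis of $\ttt(q)\La$ (respectively $\ttt(\a)\La$) exhibits each as a direct sum of copies of the projective indecomposable $\Ga\sss(p)$ (respectively $\Ga\sss(\a)$). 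Flatness ensures that $X \otimes_\Ga -$ preserves the injectivity of the leftmost arrow, yielding a short exact sequence of right $\La$-modules.

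Next I would read off the three resulting terms. The rightmost term becomes $X \otimes_\Ga K = H(X)$ by the identification $H = {-\otimes_\Ga K_\La}$ recorded at the end of \cref{sectioncleftextabcat}{}. The other two simplify via the canonical isomorphism $X \otimes_\Ga (\Ga e \otimes_k Y) \cong Xe \otimes_k Y$ (an instance of associativity of $\otimes$ together with $X \otimes_\Ga \Ga e \cong Xe$), giving $X\sss(p) \otimes_k \ttt(q)\La$ and $X\sss(\a) \otimes_k \ttt(\a)\La$. Since $\ttt(q)\La$ and $\ttt(\a)\La$ are right $\La$-modules and the inner tensor products are over the central field $k$, these emerge as free right $\La$-modules of ranks $\dim_k X\sss(p)$ and $\dim_k X\sss(\a)$, respectively.

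To match the exponent $\dim_k X\ol{p}$ in the displayed formula, I would translate $\dim_k X\sss(p)$ through the $k$-linear identification obtained by applying $X \otimes_\Ga -$ to the left $\Ga$-module isomorphism $\Ga\sss(p) \isomto \Ga\ol{p}$ of \cref{lem:preliminary}{} (right multiplication by $p$). Overall the argument is bookkeeping built on \cref{lem:2}{}; the only point requiring mild care is the flatness verification that guarantees preservation of exactness under $X \otimes_\Ga -$ and the consistent tracking of the bimodule structures through the canonical isomorphisms.
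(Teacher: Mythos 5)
Your approach runs into a genuine problem at the step where you invoke flatness to conclude that $X\otimes_\Ga -$ preserves exactness. Flatness of the two projective bimodule terms in the resolution is irrelevant for this purpose: given a short exact sequence $0\to A\to B\to C\to 0$ of left $\Ga$-modules, the functor $X\otimes_\Ga -$ yields
\[
\Tor_1^\Ga(X,C)\to X\otimes_\Ga A\to X\otimes_\Ga B\to X\otimes_\Ga C\to 0,
\]
so it is the vanishing of $\Tor_1^\Ga(X,C)$ — i.e.\ flatness of the \emph{cokernel} $C=\la\ol\a\ra$ as a left $\Ga$-module — that would guarantee exactness at the left end, not flatness (or even projectivity) of $A$ and $B$. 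And $\la\ol\a\ra$ is not flat over $\Ga$ in general. In fact the leftmost map genuinely fails to be injective after tensoring: if you take $X$ to be the simple $\Ga$-module at the vertex $s(p)$ with $p$ non-trivial, then $X\sss(p)=X\neq 0$ while $X\ol{p}=0$, so the induced map $X\sss(p)\otimes_k\ttt(q)\La\to X\sss(\a)\otimes_k\ttt(\a)\La$ is the zero map from a non-zero module. Your proposed "translation" via applying $X\otimes_\Ga -$ to $\Ga\sss(p)\isomto\Ga\ol p$ does not rescue this, because that isomorphism lands in $X\otimes_\Ga\Ga\ol p$, which is generally larger than the submodule $X\ol p\subseteq X$: the canonical map $X\otimes_\Ga\Ga\ol p\to X$ (induced by the inclusion $\Ga\ol p\hookrightarrow\Ga$) need not be injective, so $\dim_k X\sss(p)=\dim_k(X\otimes_\Ga\Ga\ol p)$ can strictly exceed $\dim_k X\ol p$.

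The paper sidesteps this by \emph{not} asserting that the leftmost term of the tensored sequence is $X\sss(p)\otimes_k\ttt(q)\La$. Tensoring only gives a right-exact sequence
\[
X\sss(p)\otimes_k\ttt(q)\La\to X\sss(\a)\otimes_k\ttt(\a)\La\to H(X)\to 0,
\]
where the first map is $x\sss(p)\otimes\ttt(q)z\mapsto x\ol p\otimes\ol q z$. The paper then computes the \emph{image} of this map directly: it equals $X\ol p\otimes_k\ol q\La\subseteq X\sss(\a)\otimes_k\ttt(\a)\La$, which, using the isomorphism $\ttt(q)\La\isomto\ol q\La$ of \cref{lem:preliminary}{}, is a free right $\La$-module $\ttt(q)\La^{\dim_k X\ol p}$. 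Replacing the first term by this image yields the asserted length-one \emph{projective} resolution, with exponent $\dim_k X\ol p$ rather than $\dim_k X\sss(p)$. This distinction is precisely where your argument goes astray.
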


		\begin{proof}
			Recall that $ H = - \otimes_\Ga \la \ol{ \a } \ra _\La $ according to \cref{sectioncleftextabcat}{}. Tensoring the minimal projective resolution of the ideal $ \la \ol{ \a } \ra $ as a $\Ga$-$\La$-bimodule (see \cref{lem:2}{}) with $ X $ from the left gives an exact sequence of the form
				\[
					X \otimes_\Ga ( { \Ga { \sss ( p ) }  } \otimes_k { { \ttt ( q ) } \La } ) \to X \otimes_\Ga ( { \Ga { \sss ( \a ) } } \otimes_k { { \ttt ( \a ) } \La } ) \to H ( X ) \to 0 .
				\]
			Using the associativity of the tensor product, and the well-known identifications $ X \otimes_\Ga { \Ga { \sss ( p ) }  } \simeq X { \sss ( p ) } $ and $ X \otimes_\Ga  \Ga { \sss ( \a ) } \simeq X { \sss ( \a ) } $, we get an exact sequence
				\[
					X { \sss ( p ) } \otimes_k { { \ttt ( q ) } \La }  \to X { \sss ( \a ) } \otimes_k { { \ttt ( \a ) } \La }  \to H ( X ) \to 0
				\]
			where $ x { \sss ( p ) } \otimes { \ttt ( q ) } z \mapsto { x \ol{ p } } \otimes \ol{ q } z $ and $  x { \sss ( \a ) } \otimes { \ttt ( \a ) } z \mapsto x { \sss ( \a ) }  \otimes { \ol{ \a } z } $ for every $ x \in X $ and $ z \in \La $. The image of the left map is equal to $ X \ol{ p } \otimes \ol{ q } \La$, i.e.\ to the submodule of $ X { \sss ( \a ) } \otimes_k { { \ttt ( \a ) } \La } $ generated by $ \ol{ p } \otimes \ol{ q } $, and therefore isomorphic to $ { \ttt ( q ) \La }^{  \dim_k X \ol{ p } } $ as a right $ \La $-module. Similarly, the right $ \La $-module $ X { \sss ( \a ) } \otimes_k { { \ttt ( \a ) } \La } $ is isomorphic to the direct sum of $ \dim_k X { \sss ( \a ) } $ copies of $  \ttt ( \a ) \La  $, completing the proof.
		\end{proof}
		
		In our last lemma, we compute the projective dimension of the $\La$-bimodule $ L $ that occurs as the kernel of the map $ \La \otimes_\Ga \La \to \La $ induced by multiplication (see \cref{sectioncleftextabcat}{}) through an explicit minimal projective resolution.

		\begin{lem}
				\label{lem:3}
			There is a minimal projective resolution of the form
				\[
					0 \to  { \La { \sss ( p ) } } \otimes_k { { \ttt ( q ) } \La }  \to { \La { \sss ( \a ) } } \otimes_k { { \ttt ( \a ) } \La } \to L \to 0
				\]
			for $ L $ as a $ \La $-bimodule.
		\end{lem}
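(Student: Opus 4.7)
The strategy is to mimic the proof of \cref{lem:2}{}. I begin by defining the $\La$-bimodule homomorphism
\[
	f \colon \La \sss(\a) \otimes_k \ttt(\a) \La \lxr L, \qquad \sss(\a) \otimes \ttt(\a) \longmapsto \ol{\a} \otimes 1 - 1 \otimes \ol{\a},
\]
which takes values in $L$ since the multiplication map sends $\ol{\a} \otimes 1 - 1 \otimes \ol{\a}$ to $\ol{\a} - \ol{\a} = 0$; concretely, $f(u \otimes v) = u\, \ol{\a} \otimes v - u \otimes \ol{\a}\, v$. To see that $f$ is surjective, it suffices to show that $L$ is generated as a $\La$-bimodule by $\ol{\a} \otimes 1 - 1 \otimes \ol{\a}$. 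This follows from the standard description of $L$ (via the bar resolution of $\La$ over $\Ga$) as the $\La$-sub-bimodule generated by $\{ \ol{u} \otimes 1 - 1 \otimes \ol{u} : u \in \NN \}$, together with a telescoping argument: for any path $u = u_0 \a u_1 \a \cdots \a u_k \in \NN$ with each $u_i$ avoiding $\a$, one can express $\ol{u} \otimes 1 - 1 \otimes \ol{u}$ as a sum of $\La$-bimodule translates of $\ol{\a} \otimes 1 - 1 \otimes \ol{\a}$ by repeatedly applying the $\Ga$-tensor relations, whereas $\ol{u} \otimes 1 - 1 \otimes \ol{u}$ vanishes whenever $u$ avoids $\a$ (since then $\ol{u}$ lies in the section image of $\Ga$).

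Next, define the $\La$-bimodule map
\[
	g \colon \La \sss(p) \otimes_k \ttt(q) \La \lxr \La \sss(\a) \otimes_k \ttt(\a) \La, \qquad \sss(p) \otimes \ttt(q) \longmapsto \ol{p} \otimes \ol{q},
\]
and verify that $f \circ g = 0$. The key computation is
\[
	f(\ol{p} \otimes \ol{q}) = \ol{p}\, \ol{\a} \otimes \ol{q} - \ol{p} \otimes \ol{\a}\, \ol{q},
\]
where each of the two terms vanishes in $\La \otimes_\Ga \La$: since $p$ and $q$ avoid $\a$, both $\ol{p}$ and $\ol{q}$ lie in the section image of $\Ga$, so the $\Ga$-tensor relations yield $\ol{p}\, \ol{\a} \otimes \ol{q} = \ol{p \a q} \otimes \ttt(q)$ and $\ol{p} \otimes \ol{\a}\, \ol{q} = \sss(p) \otimes \ol{p \a q}$, both zero because $\ol{p \a q} = 0$ in $\La$. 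The injectivity of $g$ is an immediate consequence of \cref{lem:preliminary}{}: the isomorphisms $\La \sss(p) \isomto \La \ol{p}$ and $\ttt(q) \La \isomto \ol{q} \La$ realize $g$ as the natural inclusion of $\La \ol{p} \otimes_k \ol{q} \La$ into $\La \sss(\a) \otimes_k \ttt(\a) \La$ as $\La$-sub-bimodules.

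The main obstacle is establishing the reverse inclusion $\Ker f \subseteq \Image g$, which is the bimodule analogue of the central computation in the proof of \cref{lem:2}{}. Writing $Z = \sum c_{u, v}\, \ol{u} \otimes \ol{v} \in \Ker f$ in the $k$-basis indexed by $(u, v) \in \NN \sss(\a) \times \ttt(\a) \NN$, decompose $Z = Z_1 + Z_2$ where $Z_1$ collects the terms with $(u, v)$ of the form $(u' p, q v')$; by what has been shown $Z_1 \in \Image g$, and the task reduces to proving $Z_2 = 0$. For the pairs $(u, v)$ in the support of $Z_2$, the strict $\a$-monomial Gr{\"o}bner basis conditions guarantee that the paths $u\a$, $\a v$ and $u_2 \a v$ (with $u_2$ the maximal $\a$-free suffix of $u$) all lie in $\NN$, since otherwise some proper subpath of $p \a q$ would divide these paths, forcing $p$ into a right factor of $u$ and $q$ into a left factor of $v$. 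This allows each summand of $f(Z_2)$ to be normalized into a canonical $k$-basis of $\La \otimes_\Ga \La$ consisting of tensors $\ol{x} \otimes \ol{y}$ with $x$ trivial or ending in $\a$; equating the coefficient of each canonical basis element in $f(Z_2)$ to zero and running a case analysis on the $\a$-content of $u$ and $v$ then forces every $c_{u, v}$ in the support of $Z_2$ to vanish, completing the identification $\Ker f = \Image g$ and yielding the asserted resolution, whose minimality is visible from the fact that the generators of the projective modules are mapped into the radicals of $L$ and of $\La \sss(\a) \otimes_k \ttt(\a) \La$, respectively.
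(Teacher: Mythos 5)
The first half of your argument --- the surjectivity of $f$, the inclusion $\Image g \subseteq \Ker f$, and the injectivity of $g$ via \cref{lem:preliminary}{} --- follows the paper's proof essentially verbatim.

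For the reverse inclusion $\Ker f \subseteq \Image g$, however, you depart from the paper, and this is where a genuine gap appears. The paper does \emph{not} compute $\Ker f$ directly; instead it observes that $L \simeq \La \otimes_\Ga \la\ol{\a}\ra = H(\La_\Ga)$ as $k$-vector spaces (using the $\Ga$-bimodule decomposition $\La = \Ga \oplus \la\ol{\a}\ra$) and then invokes the explicit projective resolution of $H(\La_\Ga)$ from \cref{cor:1}{} to compute $\dim_k L$. Since $f$ is surjective and $g$ is injective with $\Image g \subseteq \Ker f$, the dimension count $\dim_k \Ker f = \dim_k(\La\sss(\a) \otimes_k \ttt(\a)\La) - \dim_k L = \dim_k(\La\ol{p}) \cdot \dim_k(\ol{q}\La) = \dim_k \Image g$ closes the argument with no further computation.

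Your direct approach, by contrast, requires a tractable $k$-basis (or at least a normal form) for $\La \otimes_\Ga \La$ in which to expand $f(Z_2)$ and compare coefficients. You assert such a normal form exists --- ``tensors $\ol{x} \otimes \ol{y}$ with $x$ trivial or ending in $\a$'' --- but this is not established, and it is not straightforward: if you split $\ol{r} \otimes \ol{s}$ with $r = r_1 r_2$ ($r_2$ the maximal $\a$-free suffix) into $\ol{r_1} \otimes \ol{r_2 s}$, the path $r_2 s$ need not lie in $\NN$, and reducing $\ol{r_2 s}$ modulo $I$ may reintroduce paths containing $\a$ in the right tensor factor, forcing further iteration with no obvious termination or uniqueness. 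Likewise, the claimed membership of $u\a$, $\a v$, and $u_2\a v$ in $\NN$ for all $(u,v)$ in the support of $Z_2$ requires a careful case split on whether $p$ or $q$ is trivial, which you do not carry out and which you misattribute to ``some proper subpath of $p\a q$'' dividing the paths in question. None of the coefficient-matching case analysis is actually exhibited. The strategy is plausibly completable, but as written it replaces the paper's three-line dimension count with a substantial unfinished computation, so the key step remains a gap.
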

	
		\begin{proof}
			We claim that $ L $ is generated by the element $  { \ol{ \a } \otimes { \ttt ( \a ) } - { \sss ( \a ) }\otimes \ol{ \a } } $ as a $ \La $-bimodule. Begin by observing that $ L $ is generated by the set
				\begin{equation}
						\label{eq:4}	\hypertarget{eq:4}{}
					\{ l \otimes 1 - 1 \otimes l \mid l \in \La \}.
				\end{equation}
			Indeed, it is obvious that this set is included in $ L $. For the opposite inclusion, take an element $ Z = \sum_i l_i \otimes m_i \in L $, i.e.\ choose elements $l_i , m_i \in \La $ such that $ \sum_i l_i m_i = 0 $. Then we have
				\[
					Z = \sum_i l_i \otimes m_i - 1 \otimes 0 = \sum_i ( l_i  \otimes 1 ) m_i  - \sum_i ( 1 \otimes l_i ) m_i = \sum_i ( l_i \otimes 1 - 1 \otimes l_i ) m_i
				\]
			implying that $ Z \in {}_\La \la \{ l \otimes 1 - 1 \otimes l \mid l \in \La \} \ra_\La $.
			
			We show next that requiring $ l $ to be in $ \la \ol{ \a } \ra $ in \hyperlink{eq:4}{(\ref{eq:4}{})} yields a set that still generates $ L $. Indeed, we have that $\La = \Ga \oplus \la \ol{ \a } \ra $ according to \cref{propcleftextension}{}, since there is an algebra cleft extension $\Ga \monicc \La \epic \Ga $, where we have identified $ \Ga $ with its image inside $ \La $. In particular, for every $l \in \La $, we have $ l = \gamma + x $ for unique $ \g \in \Ga $ and $ x \in \la \ol{ \a } \ra $, implying that
				\[
					l \otimes 1 - 1 \otimes l = ( x \otimes 1 + \g \otimes 1 ) - ( 1 \otimes x + 1 \otimes \g ) =  x \otimes 1 - 1 \otimes x
				\]
			as the tensor product is over $ \Ga $.
			
			The last step is to show that for every path $ u $ divided by $ \a $ at least once, it holds that $ Z_u = \ol{ u } \otimes 1 - 1 \otimes \ol{ u } \in {}_\La \la \ol{ \a } \otimes { \ttt ( \a ) } - { \sss ( \a ) } \otimes \ol{ \a } \ra _\La $. Assume that $ u = v_1 \a v_2 $ where $ v_1 $ and $ v_2 $ are paths that avoid $ \a $. Then we have that
				\[
					Z_u = \ol{ v_1 } \, ( \ol{ \a } \otimes { \ttt ( \a ) } - { \sss ( \a ) } \otimes \ol{ \a } ) \, \ol{ v_2 } \in {}_\La \la \ol{ \a } \otimes { \ttt ( \a ) } - { \sss ( \a ) } \otimes \ol{ \a } \ra _\La
				\]
			as the paths $v_1 $ and $ v_2 $ may pass through the tensor product over $ \Ga $. Now assume that $ u = v_1 \a v_2 \a v_3 $ where the paths $ v_1, v_2  $ and $ v_3 $ avoid $ \a $. Similarly, we have
				\[
					Z_u = \ol{ v_1 } \, ( \ol{ \a } \otimes { \ttt ( \a ) } - { \sss ( \a ) } \otimes \ol{ \a } ) \, \ol{ v_2 \a v_3 }
					+ \ol{ v_1 \a v_2 } \, ( \ol{ \a } \otimes { \ttt ( \a ) } - { \sss ( \a ) } \otimes \ol{ \a } ) \, \ol{ v_3 }	
				\]
			implying that $ Z_u \in {}_\La \la \ol{ \a } \otimes { \ttt ( \a ) } - { \sss ( \a ) } \otimes \ol{ \a } \ra _\La $. An inductive argument shows that $Z_u \in {}_\La \la \ol{ \a } \otimes { \ttt ( \a ) } - { \sss ( \a ) } \otimes \ol{ \a } \ra _\La $ for any path $ u $ divided by $ \a $ at least once, which completes the proof of our claim.
			In particular, the homomorphism of $\La$-bimodules $ f \colon \La { \sss ( \a ) } \otimes_k { \ttt ( \a ) } \La \to L $ defined by $  { \sss ( \a ) } \otimes_k { \ttt ( \a ) } \mapsto \ol{ \a } \otimes { \ttt ( \a ) } - { \sss ( \a ) } \otimes \ol{ \a } $ is surjective, and thus a projective cover as the $ \La $-bimodule $ \La { \sss ( \a ) } \otimes_k { \ttt ( \a ) } \La $ is projective and indecomposable.
			
			It is not difficult to see that $ \La \ol{ p } \otimes \ol{ q } \La $, i.e.\ the submodule of $ \La { \sss ( \a ) } \otimes_k { \ttt ( \a ) } \La $ generated by $ \ol{ p } \otimes \ol{ q } $, is contained in $ \Ker f $ as the paths $ p $ and $ q $ avoid $ \a $ by assumption. To see that this is exactly the kernel of $ f $, observe that $ L \simeq \La \otimes_\Ga \la \ol{ \a } \ra = H ( \La _\Ga ) $ as $ k $-vector spaces. This follows from the definition of $ L $, the decomposition $ \La = \Ga \oplus \la \ol{ \a } \ra $ of $ \La $ as a left and right $ \Ga $-module, and the well-known identifications $ \Ga \otimes_\Ga \la \ol{ \a } \ra \simeq \la \ol{ \a } \ra \simeq \la \ol{ \a } \ra \otimes_\Ga \Ga $ and $ \Ga \otimes_\Ga \Ga \simeq \Ga $. Furthermore, we have
				\[
					\dim_k H( \La_\Ga ) = { \dim_k \La { \sss ( \a ) } } \cdot { \dim_k { \ttt ( \a ) } \La } - { \dim_k \La { \ol{ p } } } \cdot { \dim_k { \ttt ( q ) } \La }
				\]
			from \cref{cor:1}{}, where 
			$ \dim_k { \ttt ( q ) } \La = \dim_k \ol{ q } \La $ according to \cref{lem:preliminary}{}. We deduce that $ \Ker f = \La \ol{ p } \otimes \ol{ q } \La $ as $ \dim_k \La { { \ttt ( \a ) } \otimes_k { \sss ( \a ) } \La } = { \dim_k \La { \ttt ( \a ) } } \cdot { \dim_k { \sss ( \a ) } \La } $ and $ \dim_k \La \ol{ p } \otimes \ol{ q } \La = { \dim_k \La \ol{ p } } \cdot { \dim_k \ol{ q } \La } $. Lastly, the homomorphism of $ \La $-bimodules $ \La { \sss ( \a ) } \otimes_k { \ttt ( \a ) } \La \to \La \ol{ p } \otimes \ol{ q } \La $ defined by $ { \sss ( \a ) } \otimes { \ttt ( \a ) } \mapsto \ol{ p } \otimes \ol{ q } $ is clearly surjective, and in fact bijective according to \cref{lem:preliminary}{}. This completes the proof.
		\end{proof}

	We are now ready to prove the main result of the paper.

	\begin{thm}
		\label{mainthm}
		Let $ I $ be an admissible ideal of the path algebra $ kQ $ possessing a strict $ \a $-monomial Gr{\"o}bner basis with respect to some admissible order on the set of paths in $Q$. Then
			\[
				\fpd \La \leq \fpd { \La / \la \ol{ \a } \ra } + 2.
			\]
	\end{thm}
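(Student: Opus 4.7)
The plan is to apply \cref{thm:1} to the abelian category cleft extension
\[
\xymatrix@C=0.5cm{
\smod \Ga \ar[rrr]^{\mi} &&& \smod \La \ar[rrr]^{\me} &&& \smod \Ga \ar @/_1.5pc/[lll]_{\ml}
}
\]
associated by \cref{propcleftextension} to the ring cleft extension $\Ga \monicc \La \epic \Ga$; note that a strict $\a$-monomial Gr\"{o}bner basis for $I$ is in particular an $\a$-monomial generating set, so $\La / \la \ol \a \ra$ is a monomial arrow removal of $\La$ and the hypotheses of \cref{propcleftextension} are met. Once the four invariants $p_\A, p_\B, n_H, n_G$ are shown to be finite, \cref{thm:1} already yields $\fpd \La < \infty$; the sharp bound $\fpd \La \leq \fpd \Ga + 2$ will then follow from a careful bookkeeping of the projective-dimension estimates used in the proof of \cref{thm:1}.

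For $n_H$, by \cref{cor:1} every $H(X)$ fits in a two-term projective resolution of right $\La$-modules, so $n_H \leq 1$. For $n_G$, \cref{lem:3} gives a two-term resolution of the $\La$-bimodule $L$ whose terms are projective, hence flat, as left $\La$-modules; tensoring from the right with any $Y \in \smod \La$ therefore preserves exactness and produces a two-term projective resolution of $G(Y) = Y \otimes_\La L$, so $n_G \leq 1$. For $p_\A$, the first resolution of \cref{lem:2}, read as a sequence of right $\Ga$-modules, has both terms projective, giving $\pd_\Ga \la \ol \a \ra \leq 1$; combined with the right $\Ga$-module decomposition $\La = \Ga \oplus \la \ol \a \ra$ afforded by \cref{propcleftextension}, this yields $p_\A \leq 1$. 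Finally, the remark following \cref{thm:1} provides $p_\B \leq n_H + 1 \leq 2$.

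To obtain the quantitative bound, fix $A \in \smod \La$ with $\pd_\La A < \infty$ and apply the refined estimate $\pd Z \leq \max\{\pd X + 1, \pd Y\}$ (valid via the long exact $\Ext$ sequence) to the short exact sequence $0 \to G(A) \to \ml \me(A) \to A \to 0$ of \eqref{eq:G}. This gives
\[
\pd_\La A \leq \max\bigl\{\, \pd_\La G(A) + 1, \, \pd_\La \ml \me(A) \,\bigr\}.
\]
The first entry is at most $n_G + 1 \leq 2$, while by part (iii) of \cref{thm:findimcleftextab} the second is at most $\max\{n_H, \pd_\Ga \me(A) + p_\B\}$. Using $\pd_\Ga \me(A) \leq \fpd \Ga$ (from part (i) of \cref{thm:findimcleftextab} together with $p_\A \leq 1$), $n_H \leq 1$, and $p_\B \leq 2$, we obtain $\pd_\La A \leq \max\{2, \fpd \Ga + 2\} = \fpd \Ga + 2$ in the non-trivial case $\fpd \Ga \geq 0$. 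Taking the supremum over $A$ completes the proof.

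The only real subtlety---and the reason the bound comes out as $+2$ rather than the $+3$ that one reads off directly from the proof of \cref{thm:1}---is the use of the sharpened SES estimate $\pd Z \leq \max\{\pd X + 1, \pd Y\}$ in place of the cruder $\pd Z \leq \max\{\pd X, \pd Y\} + 1$. All the combinatorial and homological content of the strict monomial hypothesis is already packaged into the two-term projective resolutions established in \cref{lem:2}, \cref{cor:1}, and \cref{lem:3}.
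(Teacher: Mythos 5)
Your proof follows the same route as the paper (cleft extension from \cref{propcleftextension}, bounding $p_\A, p_\B, n_H, n_G$ via \cref{lem:2}, \cref{cor:1}, \cref{lem:3}, then a final dimension estimate), and your explicit use of the sharper SES estimate $\pd Z \leq \max\{\pd X + 1, \pd Y\}$ for $(\ref{eq:G})$ is a genuine and necessary refinement: the crude estimate written in the proof of \cref{thm:1} (and the bound in the remark following it) only yields $\fpd\Lambda \leq \fpd\Gamma + 3$ with the values $n_G, n_H \leq 1$, $p_\B \leq 2$, so your bookkeeping is the step that actually delivers the claimed $+2$.

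However, your justification for $n_G \leq 1$ contains a gap. You argue that since the two terms of the bimodule resolution in \cref{lem:3} are projective (hence flat) as left $\Lambda$-modules, applying $Y \otimes_\Lambda -$ preserves exactness. That is not what flatness of the terms gives you. Flatness of $P_1$ and $P_0$ controls $\Tor^\Lambda_{\geq 1}(Y, P_i)$, but exactness of $0 \to Y\otimes_\Lambda P_1 \to Y\otimes_\Lambda P_0 \to Y\otimes_\Lambda L \to 0$ is equivalent to $\Tor^\Lambda_1(Y, L) = 0$, which by this very resolution equals $\ker(Y\otimes_\Lambda P_1 \to Y\otimes_\Lambda P_0)$ — so the claim is circular. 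Concretely, after the identifications $Y\otimes_\Lambda (\Lambda\sss(p)\otimes_k \ttt(q)\Lambda) \cong Y\sss(p)\otimes_k \ttt(q)\Lambda$, the map sends $y\otimes u \mapsto y\ol p \otimes \ol q u$, and this fails to be injective whenever right multiplication by $\ol p$ is not injective on $Y\sss(p)$, which certainly happens for general $Y$. The paper's argument (the one used for $H$ in \cref{cor:1}, and which the paper explicitly says to repeat for $G$) sidesteps this: it only invokes right-exactness of the tensor functor, then identifies the \emph{image} of the leftmost map — namely $Y\ol p \otimes \ol q\Lambda$ — as a projective right $\Lambda$-module using the isomorphism $\ttt(q)\Lambda \cong \ol q\Lambda$ from \cref{lem:preliminary}; the resulting short exact sequence $0 \to Y\ol p\otimes\ol q\Lambda \to Y\sss(\a)\otimes_k \ttt(\a)\Lambda \to G(Y) \to 0$ then gives $\pd_\Lambda G(Y) \leq 1$ without ever needing the tensored sequence to remain exact on the left. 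You should replace the flatness argument with this direct identification.

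Aside from that step, the proof is sound and, if anything, clarifies the final quantitative estimate that the paper leaves implicit.
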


	\begin{proof}
		Assume that $ \GG = \GG_{ p , q } $ is a strict $ \a $-monomial Gr{\"o}bner basis for $ I $. In particular, the set $ \GG $ is an $ \a $-monomial generating set for $ I $, and thus there is a cleft extension of algebras $ \Ga = \La / \la \ol{ \a } \ra  \monicc \La \epic \Ga $ according to \cref{propcleftextension}{}. Therefore it suffices to prove conditions (i) to (iv) of \cref{thm:findimcleftextab}{} for this cleft extension in order to apply the theorem.
		
		For condition (i), observe that
			\[
				p_{ \smod \La } = \sup\{ \pd \me(P)_\Ga \mid P \in \Proj( \smod \La ) \} = \pd \La_\Ga
			\]
		as every projective $\La$-module is isomorphic to a direct sum of direct summands of $ \La_\La $, and recall that we have a direct sum decomposition $\La_\Ga = \Ga \oplus \la \ol{ \a } \ra $. Moreover, it holds that $ \pd \la \ol{ \a } \ra_\Ga \leq 1 $ as the first projective resolution of \cref{lem:2}{} may be viewed as a projective resolution of right $ \Ga $-modules, isomorphic to
			\[
					0 \to  { { \ttt ( q ) } \Ga }^{ \dim_k { \La { \sss ( p ) } } }  \to { { \ttt ( \a ) } \Ga }^{ \dim_k { \La { \sss ( \a ) } } } \to \la \ol{ \a } \ra _\Ga \to 0 .
			\]
		We deduce that $ p_{ \smod \La } \leq 1 $.
		
		Similarly, for the second condition we have that
			\[
				 p_{ \smod \Ga } = \sup\{ \pd \mi(P')_\La \mid P' \in \Proj( \smod \Ga ) \} = \pd \Ga_\La .
			\]
		Observe that the natural epimorphism $ \pi \colon \La \epic \Ga $ is a projective cover of right $ \La $-modules as its kernel, the ideal $ \la \ol{ \a } \ra $, is contained in the Jacobson radical of $ \La $ which is equal to the radical of the regular $ \La $-module. In particular, we have that $ \pd \Ga_\La = \pd \la \ol{ \a } \ra_\La + 1$. It remains to observe that $ \pd \la \ol{ \a } \ra_\La \leq 1 $ due to the second projective resolution of \cref{lem:2}{} viewed as a projective resolution of right $ \La $-modules. We deduce that $ p_{ \smod \Ga } \leq 2 $.
		
		For condition (iii), observe that
			\[
				n_H = \sup\{ \pd H( X )_\La \mid X \in \smod \Ga \} \leq 1
			\]
		according to \cref{cor:1}{}. Using the minimal projective resolution of $ K $ as a $ \La $-bimodule established in \cref{lem:3}{}, on may construct a projective resolution of $ G( Y_\La )_\La $ of length one for every module $ Y_\La $ in the same way as we did for $ H ( X )_\La $ in the proof of \cref{cor:1}{}. We deduce that
			\[
				n_G = \sup \{ \pd G( Y )_\La \mid Y \in \smod \La \} \leq 1
			\]
		and the proof is complete.
	\end{proof}

	\begin{cor}
			\label{maincor}
		Let the quotient algebra $ \La / \la \ol{ \a } \ra $ be a strict monomial arrow removal of the bound quiver algebra $ \La = k Q / I $. Then 
			\[
				\fpd \La \leq \fpd { \La / \la \ol{ \a } \ra } + 2.
			\]
	\end{cor}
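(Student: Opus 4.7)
The plan is to deduce this corollary as an immediate consequence of \cref{mainthm}{} combined with \cref{lem:1}{}. Specifically, \cref{mainthm}{} establishes the inequality $\fpd \La \leq \fpd{\La/\la\ol{\a}\ra} + 2$ under the (a priori weaker) hypothesis that the admissible ideal $I$ possesses a strict $\a$-monomial Gr\"{o}bner basis with respect to some admissible order on $\BQ$. So the only thing I need to check is that the hypothesis of the corollary, namely that $\La/\la\ol{\a}\ra$ is a strict monomial arrow removal of $\La$, forces the existence of such a Gr\"{o}bner basis.

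To do this, I would first fix any admissible order on the set $\BQ$ of paths in $Q$; such an order exists, for example the length-lexicographic order. By the definition of strict monomial arrow removal (see \cref{monomialarrowrem}{}), the ideal $I$ admits a strict $\a$-monomial generating set $T = T_{p,q}$ of relations. Applying \cref{lem:1}{} directly, there is a finite Gr\"{o}bner basis for $I$, with respect to the fixed admissible order, which is strict $\a$-monomial even as a generating set for $I$. In particular, this Gr\"{o}bner basis is strict $\a$-monomial in the sense of \cref{defn:2}{}, so the hypothesis of \cref{mainthm}{} is met.

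With the Gr\"{o}bner basis in hand, the conclusion $\fpd \La \leq \fpd{\La/\la\ol{\a}\ra} + 2$ follows directly from \cref{mainthm}{}, completing the proof. There is no substantive obstacle here, since all the technical work (verifying the finiteness of $p_\A$, $p_\B$, $n_H$ and $n_G$ in the cleft extension $\Ga \monicc \La \epic \Ga$ from \cref{propcleftextension}{}, which in turn relies on the bimodule projective resolutions of \cref{lem:2}{} and \cref{lem:3}{}) has already been carried out in the proof of \cref{mainthm}{}. The role of this corollary is simply to recast the main theorem in the combinatorially accessible language of strict monomial arrow removal, as advertised in the introduction.
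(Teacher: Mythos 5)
Your proposal matches the paper's proof, which is simply the observation that the corollary follows immediately from \cref{mainthm}{} once \cref{lem:1}{} supplies a strict $\a$-monomial Gr\"{o}bner basis from the strict $\a$-monomial generating set. The extra unpacking you include (fixing an admissible order, invoking \cref{monomialarrowrem}{}) is correct but not different in substance from the paper's one-line argument.
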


	\begin{proof}
		Immediate consequence of \cref{mainthm}{} and \cref{lem:1}{}.
	\end{proof}

	\section{Examples}
	\label{sectionexamples}
	
	This section is devoted to giving various concrete examples illustrating the
	reduction techniques we have discussed in the previous sections. In
	this context, we recall the next notion from \cite{GPS}. Furthermore, we provide an example showcasing why condition \hyperlink{c}{(c)} in \hyperlink{defn:1}{\cref{defn:1}{}.(iii)}{} is necessary in the context of strict monomial arrow removal.
	
	\begin{defn}
		\label{defn:reduced}
		A bound quiver algebra $ \La = k Q / I $ is called \emph{reduced} if: 
		\begin{enumerate}[\rm(i)]
			\item all arrows occur in every generating set for $ I $;  
			\item all simple modules have infinite injective dimension and
			projective dimension at least $2$; 
			\item no triangular reductions are possible.
		\end{enumerate}
	\end{defn}
	
	We proceed with our first algebra, the running example of \cref{sec:mon.ar.rem}{}.

	\begin{exam}
		\label{exam:2}
		Let $ \La = k Q / I $ be the bound quiver algebra of \cref{exam:1}{}. We recall that $ Q $ is the quiver below and $ k $ is any field. Furthermore, the ideal $ I $ is generated by set $ T $ consisting of:
		\begin{enumerate}[\rm(i)]
			\item the paths $\theta \a \b \g_1 \d_1 $ and $ \e \z $, and $\g_2 \d_2 - \g_1 \d_1 $;
			\item the paths of the form $\l_i^{j_i}$, where we have fixed integers $j_i \geq 2 $ for $i = 1, 2, \ldots, 9 $;
			\item all the paths of length two involving one loop and one non-loop arrow except for the paths $\l_1 \a $, $ \a \l_2 $, $\d_1 \l_6 $, $\d_2 \l_6 $ and $\l_9 \theta $.
		\end{enumerate}

		\smallskip
		\begin{figure}[h!]
			\centering
			\begin{tikzpicture}	%	[scale=0.80]

				%%%%%%		VERTICES (with LABELS)

				%\draw  ($(0,0)+(67.5:2)$) {$\mathbf{1}$};
				\node at ($(0,0)+(67.5:2)$) {$\mathbf{1}$};
				
				%\draw  ($(0,0)+(112.5:2)$) circle (.08);
				\node at ($(0,0)+(112.5:2)$) {$\mathbf{2}$};
				
				%\draw  ($(0,0)+(157.5:2)$) circle (.08);
				\node at ($(0,0)+(157.5:2)$) {$\mathbf{3}$};

				\path 
				($(0,0)+(157.5:2)$)			coordinate (3)      
				($(0,0)+(247.5:2)$)			coordinate (6)
				;
				
				\coordinate (Mid 3 6) at ($(3)!0.5!(6)$);

				%\draw  ($(Mid 3 6)+(202.5:0.7)$) circle (.08);
				\node at ($(Mid 3 6)+(202.5:0.7)$) {$\mathbf{4}$};
				
				%\draw  ($(Mid 3 6)+(22.5:0.7)$) circle (.08);
				\node at ($(Mid 3 6)+(22.5:0.7)$) {$\mathbf{5}$};

				%\draw  ($(0,0)+(247.5:2)$) circle (.08);
				\node at ($(0,0)+(247.5:2)$) {$\mathbf{6}$};
				
				%\draw  ($(0,0)+(292.5:2)$) circle (.08);
				\node at ($(0,0)+(292.5:2)$) {$\mathbf{7}$};
				
				%\draw  ($(0,0)+(337.5:2)$) circle (.08);
				\node at ($(0,0)+(337.5:2)$) {$\mathbf{8}$};
				
				%\draw  ($(0,0)+(22.5:2)$) circle (.08);
				\node at ($(0,0)+(22.5:2)$) {$\mathbf{9}$};

				%%%%%%%%		ARROWS  (except for loops)

				\draw[->,shorten <=6pt, shorten >=6pt] ($(0,0)+(67.5:2)$) arc (67.5:112.5:2);
				
				\draw[->,shorten <=6pt, shorten >=6pt] ($(0,0)+(112.5:2)$) arc (112.5:157.5:2);

				\draw[->,shorten <=6pt, shorten >=6pt]  ($(0,0)+(157.5:2)$) -- ($(Mid 3 6)+(202.5:0.7)$);
				
				\draw[->,shorten <=6pt, shorten >=6pt]  ($(0,0)+(157.5:2)$) -- ($(Mid 3 6)+(22.5:0.7)$);

				\draw[->,shorten <=6pt, shorten >=6pt] ($(Mid 3 6)+(202.5:0.7)$) -- ($(0,0)+(247.5:2)$);
				
				\draw[->,shorten <=6pt, shorten >=6pt] ($(Mid 3 6)+(22.5:0.7)$)  -- ($(0,0)+(247.5:2)$);

				\draw[->,shorten <=6pt, shorten >=6pt] ($(0,0)+(247.5:2)$) arc (247.5:292.5:2);
				
				\draw[->,shorten <=6pt, shorten >=6pt] ($(0,0)+(292.5:2)$) arc (292.5:337.5:2);
				
				\draw[->,shorten <=6pt, shorten >=6pt] ($(0,0)+(337.5:2)$) arc (337.5:382.5:2);
				
				\draw[->,shorten <=6pt, shorten >=6pt] ($(0,0)+(22.5:2)$) arc  (22.5:67.5:2);

				%%%%%%%%%		LOOPS

				\draw[->,shorten <=6pt, shorten >=6pt] ($(0,0)+(67.5:2)$).. controls +(67.5+41:1.2) and +(67.5-41:1.2) .. ($(0,0)+(67.5:2)$) ;
				\node at ($(0,0)+(67.5:3)$) {$\l_1$};
				
				\draw[->,shorten <=6pt, shorten >=6pt] ($(0,0)+(112.5:2)$).. controls +(112.5+41:1.2) and +(112.5-41:1.2) .. ($(0,0)+(112.5:2)$) ;
				\node at ($(0,0)+(112.5:3)$) {$\l_2$};
				
				\draw[->,shorten <=6pt, shorten >=6pt] ($(0,0)+(157.5:2)$).. controls +(157.5+41:1.2) and +(157.5-41:1.2) .. ($(0,0)+(157.5:2)$) ;
				\node at ($(0,0)+(157.5:3)$) {$\l_3$};

				\draw[->,shorten <=6pt, shorten >=6pt] ($(Mid 3 6)+(202.5:0.7)$).. controls +(202.5+41:1.2) and +(202.5-41:1.2) .. ($(Mid 3 6)+(202.5:0.7)$) ;
				\node at ($(0,0)+(202.5:3.05)$) {$\l_4$};
				
				\draw[->,shorten <=6pt, shorten >=6pt] ($(Mid 3 6)+(22.5:0.7)$).. controls +(22.5+41:1.2) and +(22.5-41:1.2) .. ($(Mid 3 6)+(22.5:0.7)$) ;
				\node at ($(0,0)+(22.5:0.3)$) {$\l_5$};

				\draw[->,shorten <=6pt, shorten >=6pt] ($(0,0)+(247.5:2)$).. controls +(247.5+41:1.2) and +(247.5-41:1.2) .. ($(0,0)+(247.5:2)$) ;
				\node at ($(0,0)+(247.5:3)$) {$\l_6$};
				
				\draw[->,shorten <=6pt, shorten >=6pt] ($(0,0)+(292.5:2)$).. controls +(292.5+41:1.2) and +(292.5-41:1.2) .. ($(0,0)+(292.5:2)$) ;
				\node at ($(0,0)+(292.5:3)$) {$\l_7$};
				
				\draw[->,shorten <=6pt, shorten >=6pt] ($(0,0)+(337.5:2)$).. controls +(337.5+41:1.2) and +(337.5-41:1.2) .. ($(0,0)+(337.5:2)$) ;
				\node at ($(0,0)+(337.5:3)$) {$\l_8$};
				
				\draw[->,shorten <=6pt, shorten >=6pt] ($(0,0)+(22.5:2)$).. controls +(22.5+41:1.3) and +(22.5-41:1.3) .. ($(0,0)+(22.5:2)$) ;
				\node at ($(0,0)+(22.5:3.05)$) {$\l_9$};

				%%%%%%%		ARROW LABELS

				\node at ($(0,0)+(90:2.25)$) {$\alpha$};
				
				\node at ($(0,0)+(135:2.25)$) {$\beta$};

				\coordinate (gamma 1) at ($(3)!0.5!($(Mid 3 6)+(202.5:0.7)$)$);
				
				\coordinate (gamma 2) at ($(3)!0.5!($(Mid 3 6)+(22.5:0.7)$)$);
				
				\coordinate (delta 1) at ($($(Mid 3 6)+(202.5:0.7)$)!0.5!(6)$);
				
				\coordinate (delta 2) at ($($(Mid 3 6)+(22.5:0.7)$)!0.5!(6)$);

				\node at ($(gamma 1)+(170:0.25)$) {$\gamma_1$};
				
				\node at ($(gamma 2)+(45:0.25)$) {$\gamma_2$};

				\node at ($(delta 1)+(235:0.25)$) {$\delta_1$};
				
				\node at ($(delta 2)+(10:0.31)$) {$\delta_2$};

				\node at ($(0,0)+(270:2.25)$) {$\varepsilon$};			
				
				\node at ($(0,0)+(315:2.25)$) {$\zeta$};
				
				\node at ($(0,0)+(0:2.25)$) {$\eta$};
				
				\node at ($(0,0)+(45:2.25)$) {$\theta$};

			\end{tikzpicture}	
		\end{figure}
		
		Recall that the quotient algebra $ \La / \la \ol{ \a } \ra $ is not a strict monomial arrow removal of $ \La $, see \cref{exam:1'}{}. However, there is a strict $ \a $-monomial basis $ \GG = \GG_{ p , q } $ for $ I $, where $p = \theta $ and $ q = \b \g_1 \d_1 $ or $ q = \b \g_2 \d_2 $, for any admissible order on the set of paths in $ Q $, see \cref{exam:1''}{}. We show next that $ \La $ is a reduced algebra.
		
		First, all simple modules have infinite projective and injective dimension due to the loops $\l_i$ according to the Strong No Loop Theorem \cite{ILP}{}. Furthermore, the algebra $\La$ is triangle reduced as the quiver $Q$ is strongly connected, see \cite[Lemma~3.12]{Giata}{}. Lastly, every arrow of $Q$ occurs in every generating set $ T $ for $I$. Indeed, every arrow different from $ \a $ occurs in $ I $ in a path of length $ 2 $ and, therefore, it must occur in $T$ as well. On the other hand, if $ \a $ did not occur in $T$, then $\la \ol{ \a } \ra $ would be a projective $ \La $-bimodule according to \cite{GPS}{}, and this would contradict the fact that $ I $ possesses a strict $\a$-monomial Gr{\"o}bner basis. To see that, note that the homomorphism of $ \La $-bimodules $ \La { e_1 } \otimes_k { e_2 } \La \to \la \ol{ \a } \ra$ such that $ { e_1 } \otimes { e_2 } \mapsto \ol{ \a } $ is a projective cover, and its kernel contains the non-zero element $ \ol{ p } \otimes \ol{ q } $.

		We also note that $\fpd \La \neq 0 $ according to \cite[Lemma~6.2]{Bass}{}, since the left regular module $ {}_\La \La $ does not possess a submodule isomorphic to the simple left module corresponding to vertex $ 2 $. Indeed, no path occurring in $ T $ starts with $ \a $, and thus $z \in I $ whenever $z \in { e_2 { k Q } } $ and $  \a z \in I $. Furthermore, the algebra $ \La $ is not monomial, and its Loewy length can be arbitrarily large as integers $ j_i \geq 2 $ may be chosen freely. Finally, if we choose $j_7=2$ (i.e.\ $\l_7^2 \in I$), then \cite[Corollary~4.14]{Giata}{} applied to the loop $ \l_7 $ and the arrow $ \e $ implies that $ \La $ is not Iwanaga-Gorenstein, since the paths $ \e \l_7 $ and $ \e \z $ are zero in $ \La $.

		We close the example by demonstrating that
		\[
		0 < \fpd \La \leq 9
		\]
		through an application of \cref{mainthm}{}. Indeed, \cref{mainthm}{} implies directly that $ \fpd \La \leq \fpd { \La / \la \ol{ \a } \ra } + 2 $. Furthermore, we perform seven consecutive triangle reductions for the quotient algebra $ \La / { \la \ol{ \a } \ra } $ (see \cite{FosGriRei}). At every step, we extract a local algebra corresponding to the external vertex we want to remove (the unique source or sink vertex), or the direct product of two local algebras corresponding to vertices $4$ and $5$, until we reach two local algebras corresponding to the remaining two vertices. As each such reduction can increase the finitistic dimension by at most one, and local algebras have zero finitistic dimension, this process implies that $ \fpd { \La / \la \ol{ \a } \ra } \leq 7 $. Consequently, the upper bound $ \fpd \La \leq 9 $ holds.
	\end{exam}

	The next example is a variation of \cref{exam:2}{}.
	
	\begin{exam}
		\label{exam:3}
		Let $Q$ be the same quiver as in \cref{exam:2}{}, and let $ T' $ be the set that results from the set $ T $ of the same example if we replace $\th \a \b \g_1 \d_1 $ with $\th \a \b $. It holds that $ \La' = k Q / I' $ is a bound quiver algebra, where $ I' $ is the ideal of $ k Q $ generated by $ T' $. Furthermore, the set $ T' $ is strict $ \a $-monomial as $ p \a q \in T' $ for the paths $ p = \th $ and $q = \b $, and $ T' $ satisfies properties \hyperlink{P1}{(P)} for these paths. For instance, property \hyperlink{P3}{(P3)} is automatically satisfied as both $p$ and $q$ are arrows. 
		
		The algebra $ \La' $ is reduced, it is not monomial, its Loewy length is arbitrarily large and its little finitistic dimension is non-zero, in the same way as in \cref{exam:2}{}. Moreover, the algebra $ \La ' $ is not Iwanaga-Gorenstein if we choose $ j_7 = 2 $.
		It follows from \cref{maincor}{} and the above paragraph that $0 < \fpd  \La' \leq 9 $.
	\end{exam}

	\begin{exam}
			\label{magicexam}
		Let $Q$ be the quiver given by 
		\[\xymatrix{
			& \mathbf{1} \ar[dl]_{\b} \ar[d]_{\a} &
			\mathbf{5} \ar@/^-0.5pc/[l]_{\th_1} \ar@/^0.5pc/[l]^{\th_2} \ar[d]_{\h} \\
			\mathbf{2} \ar[dr]_{\d} & \mathbf{3} \ar@/^-0.5pc/[d]_(0.3){\e_1} \ar@/^0.5pc/[d]^(0.4){\e_2} & \mathbf{6} \ar[l]^{\g} \\
			& \mathbf{4} \ar@/^-3.0pc/[uur]_{\z} & 
		}\]
		with relations
		\[ T = \{ \th_1 \a \e_1, \, \e_2 \z, \,  \z \h \g, \, \z \th_2, \, \b \d \z, \, \z \th_1 \b, \, \d \z \h, \, \g \e_1 - \g \e_2, \, \h \g \e_2 \} \]
		over $\mathbb{Z}_2$.  Let $I$ be the ideal of $\mathbb{Z}_2Q$
		generated by these relations, and let $\Lambda = \mathbb{Z}_2Q/I$. We
		show that the finitistic dimension of $ \Lambda$ is finite, even though it is a reduced algebra in the sense of \cref{defn:reduced}{}.

		First, we argue that all simple modules have projective
		dimension at least $2$.  Let $S_i$ denote the simple module associated
		to vertex $i$ for $i = 1,2,\ldots, 6$.  Computations then show that
		\begin{align}
			\pd_\Lambda S_2 & = 4,\notag\\
			\pd_\Lambda S_6 & = 2,\notag\\
			\Omega^4_\Lambda(S_1) & \simeq \Omega^2_\Lambda(S_1)\oplus S_4,\notag\\
			\Omega^4_\Lambda(S_3) & \simeq \Omega^2_\Lambda(S_1) \oplus Z_3	\notag	\\
			\Omega^3_\Lambda(S_4) & \simeq \Omega^2_\Lambda(S_1)\oplus Z_4	\notag	\\
			\Omega^5_\Lambda(S_5) & \simeq \Omega^2_\Lambda(S_1)\oplus Z_5		\notag
		\end{align}
		for modules $ Z_3 , Z_4 $ and $ Z_5 $. The first isomorphism implies that $ \pd_\La \Omega^2_\Lambda(S_1) = \infty $, and together with the other three isomorphisms it yields that $\pd_\Lambda S_i = \infty$ for $i =
		1,3,4,5$. In particular, it holds that $ \pd S_i \geq 2 $ for every vertex $ i $.
				
		Next, we show that all simple modules have infinite injective
		dimension.  Computations show that
		\[\Omega^{-2}_\Lambda(S_1) \simeq \Omega^{-2}_\Lambda(S_2) \simeq
		\Omega^{-5}_\Lambda(S_3) \simeq \Omega^{-1}_\Lambda(S_5)\simeq
		\Omega^{-4}_\Lambda(S_6),\] 
		\[\Omega^{-3}_\Lambda(S_5) \simeq \Omega^{-1}_\Lambda(S_5) \oplus X\]
		\[\Omega^{-2}_\Lambda(S_4) \simeq S_5\oplus Y\]
		for modules $X$ and $Y$. We deduce from the isomorphism of the second row that $ \idim \Omega^{-1}_\Lambda(S_5) = \idim S_5 = \infty $. Our claim follows now from the rest of the provided isomorphisms.
		
		In addition to the above, all arrows of $ Q $ occur in every generating set for $ I $, and no triangular reduction is possible as the quiver $ Q $ is strongly connected, see \cite[Lemma~3.12]{Giata}{}.
		Moreover, it holds that $ \La $ is not monomial as the paths $ \g \e_1 $ and $ \g \e_2 $ are equal and non-zero, the Loewy length of $ \La $ is $ 8 $ where $ \th_2 \a \e_1 \z \th_1 \a \e_2 $ is the longest non-zero path, and $ \fpd \La $ is non-zero as the simple left $ \La $-module corresponding to vertex $ 3 $, for instance, is not isomorphic to a submodule of $ {}_\La \La $ (\!\!\cite[Lemma~6.2]{Bass}{}).

		We show next that the algebra $ \La $ admits the quotient algebra $ \La / \la \ol{ \a } \ra $ as a strict
		monomial arrow removal. Specifically, the generating set $ T = T_{ p , q } $ is strict $ \a $-monomial for $ p = \th_1 $ and $ q = \e_1 $ according to \cref{lem:0}{}. Indeed, property \hyperlink{P1}{(P1)} clearly holds for $ T $ and $ \th_1 \a \e_1 \in T $. Property \hyperlink{P3}{(P3)} also holds as no path occurring in $ T $ may divide $ p $ or $ q $, being the case that they comprise a single arrow each. Furthermore, properties \hyperlink{P2}{(P2)} and \hyperlink{P2op}{(P2)$^{\textrm{op}}$} hold as no path occurring in $ T $ starts with arrow $ \e_1 $ or ends with arrow $ \th_1 $.

		Now let $Q^*$ be the quiver that results from quiver $Q$ after removing arrow $ \a $, and let
		$I^*$ be the ideal of $\mathbb{Z}_2Q^*$ generated by $ T \setminus \{ \th_1 \a \e_1 \} $.
		Then the quotient algebra $\Gamma = \Lambda/\langle \ol{ \a } \rangle $ may be identified with the bound quiver algebra $ \mathbb{Z}_2Q^*/I^* $. We complete the example by showing that $ \Ga $ is also a reduced algebra with finite finitistic dimension.
		% and admits no strict arrow removal

		First, one can compute that the projective and the injective dimensions of the 
		simples modules are unchanged. Furthermore, all arrows in the quiver of $\Gamma$ occur in every generating set for $ I^* $, in a similar way as for $ \La $. Lastly, no triangular reduction is possible as the quiver $ Q^* $ is strongly connected (see \cite[Lemma~3.12]{Giata}{}).
		However, having removed relation $ \th_1 \a \e_1 $, we may now replace the arrows $ \e_1 $ and $ \e_2 $ of $ \La $ by the new arrows $ \e_1' = \e_1 - \e_2 $ and $ \e_2' = \e_2 $ as they still form a $ k $-basis for $ { e_3 J( \Ga ) e_4 } / { e_3 J( \Ga )^2 e_4 } $ where $ J( \Ga ) $ is the Jacobson radical of $ \Ga $. In particular, $ \Ga $ is isomorphic to a monomial bound quiver algebra, and thus $ \fpd \Ga < \infty $ according to \cite{GKK}. We deduce that $ \fpd \La < \infty $ by an application of \cref{mainthm}{}.
		
		All computations in this example have been carried out using QPA \cite{QPA}.
	\end{exam}

	We close this paper with an example that showcases why property \hyperlink{c}{(c)} in \hyperlink{defn:1}{\cref{defn:1}{}.(iii)}{} is necessary.

	\begin{exam}
		\label{exam:4}{}
		Let $Q$ be the following quiver
		\[\begin{tikzcd}
			\mathbf{1} & \mathbf{2} & \mathbf{3} & \mathbf{4} & \mathbf{5} & \mathbf{7} & \mathbf{8} & \mathbf{9} \\
			&&&& \mathbf{6}
			\arrow["\b", from=1-1, to=1-2]
			\arrow["\a", from=1-2, to=1-3]
			\arrow["\g", from=1-3, to=1-4]
			\arrow["{\d_1}", from=1-4, to=1-5]
			\arrow["{\d_2}"', from=1-4, to=2-5]
			\arrow["{\e_1}", from=1-5, to=1-6]
			\arrow["\z", from=1-6, to=1-7]
			\arrow["\h", from=1-7, to=1-8]
			\arrow["\lambda", from=1-8, to=1-8, loop, in=325, out=35, distance=10mm]
			\arrow["{\e_2}"', from=2-5, to=1-6]
		\end{tikzcd}\]
		and let $ k $ be any field. Let $ I $ be the ideal of $ k Q $ generated by the set
			\[
				T = \{ \b \a \g \d_1 \e_1 \z , \,  \d_1 \e_1 - \d_2 \e_2 , \,  \e_2 \z \h , \, \lambda^2 \}
			\]
		and observe that $ \La = k Q / I $ is a bound quiver algebra. Furthermore, the set $ T $ satisfies all conditions of Definition \ref{defn:1}{} for $ p = \b $ and $ q = \g \d_1 \e_1 \z $ except for the last one, i.e.\ the path $ \d_1 \e_1 $ occurs in $ T $ and divides $ q $. For instance, it can be verified that no proper subpath of $ \b \a \g \d_1 \e_1 \z $ is in $ I $ and no path occurring in $ T $ ends with $ \b $ or starts with $ \z $, $ \e_1 \z $, $ \d_1 \e_1 \z $ or $ \g \d_1 \e_1 \z $.
		
		We claim that $ I $ does not possess a strict $ \a $-monomial Gr\"{o}bner basis.
		If there was such a basis, we would have that $ \pd \la \ol{ \a } \ra_\La \leq 1 $ according to the second projective resolution of \cref{lem:2}{} viewed as a complex of right $ \La $-modules. However, it holds that the beginning of the minimal projective resolution of $ \la \ol{ \a } \ra_\La $ is of the form
			\[
				0 \to S_9  \to e_8 \La \to e_3 \La ^{ 2 } \to \la \ol{ \a } \ra \to 0
			\]
		where $ S_9 $ is the simple right $ \La $-module corresponding to vertex $ 9 $. But $\pd S_9 = \infty $ due to the Strong No Loop Theorem \cite{ILP}{} and our claim follows. In particular, \cref{mainthm}{} does not apply for $ \La $ and the arrow $ \a $.
	\end{exam}


\begin{thebibliography}{19}
		
		\bibitem{ASS}
		\textsc{I.~Assem, D.~Simson, A.~Skowronski},
		\emph{Elements of the Representation Theory of Associative Algebras, Volume 1: Techniques of Representation Theory},
		University Press, Cambridge, 2006.
		
		
		\bibitem{Bass}
		\textsc{H.~Bass},
		\emph{Finitistic dimension and a homological generalization of semi-primary rings},
		Trans.\ Amer.\ Math.\ Soc.\ \textbf{95} (1960), no.\ 3, 466--488.
		%	DOI: 10.2307/1993568.
		
		
		\bibitem{Beligiannis}
		\textsc{A.~Beligiannis},
		\emph{Cleft extensions of abelian categories and applications to ring theory},
		Commun.\ Algebra \textbf{28} (2000), no.\ 10, 4503--4546.

		
		\bibitem{EPS}
		\textsc{K.~Erdmann, C.~Psaroudakis, \O.\ Solberg}, {\em Homological invariants of the arrow removal operation}, Represent.\ Theory \textbf{26} (2022), 370--387.
		
		
		\bibitem{FosGriRei} 
		\textsc{R.~Fossum, P.~Griffith, I.~Reiten}, \emph{Trivial
			Extensions of Abelian Categories with Applications to Ring Theory},
		Lecture Notes in Mathematics, vol. {\bf 456}, Springer, Berlin, 1975. 

		\bibitem{Giata}
		\textsc{O.~Giatagantzidis},
		\emph{Arrow reductions for the finitistic dimension conjecture}, arXiv:2507.12978 (2025), 51 pages.
				
		
		\bibitem{Green2}
		\textsc{E.L.~Green},
		\emph{Noncommutative Gr{\"o}bner bases, and
			projective resolutions}, in: P.~Dr{\"a}xler, C.M.~Ringel, G.O.~Michler (eds), Computational Methods for Representations of Groups and Algebras, Progr.\ Math.\ \textbf{173}, Birkh{\"a}user, Basel, 1999, 29--60.
		
		
		
		\bibitem{GKK} \textsc{E.\ L.\ Green,  E.\ Kirkman, J.\ Kuzmanovich},
		\emph{Finitistic dimension conjecture for monomial algebras}, J.\ Algebra
		\textbf{136}, no.\ 1, 37--50.
	 
		
		\bibitem{GPS} \textsc{E.\ L.\ Green, C.\ Psaroudaki, \O.\ Solberg},
		\emph{Reduction techniques for the finitistic dimension}, Trans.\ Amer.\ Math.\ Soc.\ \textbf{374} (2021), no.\ 10, 6839--6879.
		
		

		
		\bibitem{ILP}
		\textsc{K.~Igusa, S.~Liu, C.~Paquette},
		\emph{A proof of the strong no loop conjecture},
		Adv.\ Math.\ \textbf{228} (2011), no.\ 5, 2731--2742.
				
		\bibitem{QPA}
		\textsc{The QPA-team}, QPA - Quivers, path algebras and
		representations - a GAP package, Version 1.34; 2023
		(https://folk.ntnu.no/oyvinso/QPA/)
		
	\end{thebibliography}
\end{document}